 \newtheorem{thm}{Theorem}[section]
 \newtheorem{lem}[thm]{Lemma}
 \newtheorem{dfn}[thm]{Definition}
 \newtheorem{rmk}[thm]{Remark}
 \newtheorem{ex}[thm]{Example}
 \theoremstyle{definition}
 \theoremstyle{remark}
 \numberwithin{equation}{section}
\newcommand{\sm}{\left(\begin{smallmatrix}}
\newcommand{\esm}{\end{smallmatrix}\right)}
\newcommand{\mat}{\left(\begin{matrix}}
\newcommand{\emat}{\end{matrix}\right)}
\newcommand{\lla}{\,\langle\!\langle\,}
\newcommand{\rra}{\,\rangle\!\rangle\,}
\newcommand{\la}{\,\langle\,}
\newcommand{\ra}{\,\rangle\,}
\newcommand{\mbf}{\mathbf}
\newcommand{\mc}{\mathcal}
\def\CC{\mathbb{C}}
\def\HH{\mathbb{H}}
\def\ZZ{\mathbb{Z}}
\def\cusp{\mathrm{cusp}}
\def\m{\mathrm{mod}}
\def\sk{\mathop{\rm sk}}
\def\GL{\mathrm{GL}}
\def\Ind{\mathrm{Ind}}
\def\SL{\mathrm{SL}}
\title[Pairings of harmonic Maass-Jacobi forms]{Pairings of harmonic Maass-Jacobi forms involving special values of partial $L$-functions} 
 \author{Dohoon Choi}
 \author{Subong Lim}
 \address{School of liberal arts and sciences, Korea Aerospace University, 200-1, Hwajeon-dong, Goyang, Gyeonggi 412-791, Republic of Korea}
  \email{choija@kau.ac.kr}
 \address{School of Mathematics, Korea Institute for Advanced Study (KIAS), 85 Hoegiro, Dongdaemun-gu, Seoul 130-722, Republic of Korea}
  \email{subong@kias.re.kr}
\subjclass[2010]{Primary 11F37, Secondary 11F50}
\thanks{Keywords:  Haberland formula, Jacobi integral, Harmonic Maass-Jacobi form}
\begin{document}
\begin{abstract}
In this paper, considering  the Eichler-Shimura cohomology theory for Jacobi forms, we study connections between harmonic Maass-Jacobi forms and Jacobi integrals.
As an application we study a pairing between two Jacobi integrals, which is defined by special values of partial $L$-functions of skew-holomorphic Jacobi cusp forms. We obtain connections between this pairing and the Petersson inner product for skew-holomorphic Jacobi cusp forms.
This result can be considered as analogue of Haberland formula of elliptic modular forms for Jacobi forms.
\end{abstract}

\maketitle

\section{Introduction}

Zwegers \cite{Zwe} obtained one of  important advances in researches on mock theta functions.
This result implies that  mock theta functions can be interpreted as specializations of certain real-analytic Jacobi forms to some points.
The result of Zwegeres motivated systematic studies in
a harmonic Maass Jacobi form \cite{BR} and a Jacobi integral \cite{CL2}.
Bringmann and Richter \cite{BR} suggested and studied a systematic theory for harmonic Maass-Jacobi forms, including real analytic Jacobi forms constructed by Zwegers, and from this they proved Zagier type dualities for Jacobi forms. A Jacobi integral, which is analogue of an Eichler integral, was studied by Choie and the second author \cite{CL2}, and they showed that Zwegers' real analytic Jacobi forms can be expressed as a sum of two Jacobi integrals.


In this paper, we study connections between a harmonic Maass-Jacobi form  and a Jacobi integral.  More precisely we show that the holomorphic part of a harmonic Maass-Jacobi form is a Jacobi integral and that the non-holomorphic part of a harmonic Maass-Jacobi form is given by the non-holomorphic Jacobi integral of a skew-holomorphic Jacobi cusp form.
In order to do it we study the supplementary function theory for Jacobi forms. 
The supplementary function theory for elliptic modular forms  was introduced by Knopp \cite{Kno} and developed by Knopp and Husseini \cite{HK} to study Eichler integrals and Eichler-Shimura cohomology for Fuchsian groups.
Furthermore, we  prove analogues of Haberland formula in the context of Jacobi forms.

Throughout this paper $\Gamma_1$ denotes $\SL_2(\ZZ)$.
Let $\Gamma\subset\Gamma_1$ be a $H$-group, i.e., a finitely generated Fuchsian group of the first kind, which has at least one parabolic class. We assume that $[\Gamma_1:\Gamma]<\infty$.
Let $k\in\ZZ$, $m\in\ZZ$ with $m>0$ and $\chi$ be a  multiplier system of weight $k+\frac 12$ on $\Gamma$.
 We denote the space of Jacobi cusp forms (resp. skew-holomorphic Jacobi cusp forms) of weight $k+\frac12$, index $m$ and multiplier system $\chi$ on $\Gamma^{J}$ by $S_{k+\frac12,m,\chi}(\Gamma^{J})$ (resp. $J^{\sk,\cusp}_{k+\frac12,m,\chi}(\Gamma^{J})$), where   $\Gamma^{J}$ denotes a  Jacobi group $\Gamma \ltimes \ZZ^2$.

First we review some basic notions of harmonic Maass-Jacobi forms based on \cite{BR}. A harmonic Maass-Jacobi form of weight $k+\frac12$, index $m$ and multiplier system $\chi$ on $\Gamma^J$ is a smooth function $F$ on $\HH\times\CC$  that transforms like a Jacobi form of weight $k+\frac12$, index $m$ and multiplier system  $\chi$ on $\Gamma^J$ and is annihilated by the differential operator $C^{k+\frac12, m}$, where $C^{k,m}$ is given by
\begin{eqnarray*}
C^{k,m} &:=& -2(\tau-\bar{\tau})^2\partial_{\tau\bar{\tau}} -(2k-1)(\tau-\bar{\tau})\partial_{\bar{\tau}} + \frac{(\tau-\bar{\tau})^2}{4\pi im}\partial_{\bar{\tau}zz}\\
&& + \frac{k(\tau-\bar{\tau})}{4\pi im}\partial_{z\bar{z}} + \frac{(\tau-\bar{\tau})(z-\bar{z})}{4\pi im}\partial_{zz\bar{z}}-2(\tau-\bar{\tau})(z-\bar{z})\partial_{\tau\bar{z}} + k(z-\bar{z})\partial_{\bar{z}}\\
&&+\frac{(\tau-\bar{\tau})^2}{4\pi im}\partial_{\tau\bar{z}\bar{z}}+ \biggl(\frac{(z-\bar{z})^2}{2} + \frac{k(\tau-\bar{\tau})}{4\pi im}\biggr)\partial_{\bar{z}\bar{z}} + \frac{(\tau-\bar{\tau})(z-\bar{z})}{4\pi im}\partial_{z\bar{z}\bar{z}}.
\end{eqnarray*}
Here we write  $\partial_\tau := \frac{\partial}{\partial\tau}, \partial_{\bar{\tau}} := \frac{\partial}{\partial\bar{\tau}},\partial_z := \frac{\partial}{\partial z}$ and $\partial_{\bar{z}} := \frac{\partial}{\partial\bar{z}}$, where $\tau = u+iv\in\HH$ and $z = x+iy\in\CC$.
We are particularly interested in harmonic Maass-Jacobi forms, which are holomorphic in $z$. We denote the space of such forms by $\hat{J}_{k+\frac12,m,\chi}(\Gamma^{J})$.

We introduce another differential operator
\[
\xi_{k,m} := \biggl(\frac{\tau-\bar{\tau}}{2i}\biggr)^{k-5/2}D^{(m)}_-,
\]
where
\[
D_-^{(m)} := \biggl(\frac{\tau-\bar{\tau}}{2i}\biggr)\biggl(-(\tau-\bar{\tau})\partial_{\bar{\tau}}-(z-\bar{z})\partial_{\bar{z}}+\frac{1}{4\pi m}\biggl(\frac{\tau-\bar{\tau}}{2i}\biggr)\partial_{\bar{z}\bar{z}}\biggr).
\]
The assignment $F\mapsto \xi_{k+\frac12,m}(F)$ gives a linear map
\[
\xi_{k+\frac12,m}: \hat{J}_{k+\frac12,m,\chi}(\Gamma^{J}) \to J^{\sk!}_{\frac52-k,m,\chi}(\Gamma^{J}),
\]
where $J^{\sk!}_{\frac52-k,m,\chi}(\Gamma^{J})$ is the space of weak skew-holomorphic Jacobi forms of weight $\frac52-k$, index $m$ and multiplier system $\chi$ on $\Gamma^J$.
We denote the pre-image of the space of skew-holomorphic Jacobi cusp forms $J^{\sk,\cusp}_{\frac52-k,m,\chi}(\Gamma^{J})$ under $\xi_{k+\frac12,m}$ by $\hat{J}^{\cusp}_{k+\frac12,m,\chi}(\Gamma^{J})$.
Any harmonic  Maass-Jacobi form $F\in \hat{J}^{cusp}_{k+\frac12,m,\chi}(\Gamma^J)$ has a unique decomposition $F =F^+ + F^-$, where the function $F^{+}$ (resp. $F^{-}$) is called the holomorphic (resp. non-holomorphic) part of $F$. We denote   the space of holomorphic parts of $F\in \hat{J}^{cusp}_{k+\frac12,m,\chi}(\Gamma^J)$ by $\hat{J}^+_{k+\frac12,m,\chi}(\Gamma^J)$.

In the theory of harmonic weak Maass forms there are two important differential operators $\xi_{k} := 2i(\frac{\tau-\bar{\tau}}{2i})^{k}\overline{\partial_{\overline{\tau}}}$  and $D:= \frac{1}{2\pi i}\partial_{\tau}$ (see \cite{BF, BOR}). The differential operator $\xi_{k,m}$ is analogous with $\xi_k$ and we have another differential operator, which is analogous with $D$, given by
\[
H_m :=  8\pi im\partial_{\tau} - \partial_{zz}.
\]
This is called the heat operator.
The action of the heat operator on Jacobi forms was  studied by other researchers (for example see \cite{BRR, CK}).


Now we consider the  special $\mathbb{C}[\Gamma^{J}]$-module to define a Jacobi integral.
\begin{dfn}\label{dfnofpme}
For  positive integers $k$ and $m$  let $P_{k,m}$ be the set of holomorphic functions $G$ on $\HH\times\CC$ which satisfy the  conditions
\begin{enumerate}
\item[(1)] for all $X\in\ZZ^2$, $G|_{m}X = G$ (for the definition of $|_{m}X$ see Section \ref{section3.1}),
\item[(2)] $H^{k+1}_{m}G = 0$.
\end{enumerate}
\end{dfn}{

The set $P_{k-2,m}$ is preserved under the slash operator $|_{\frac 52-k,m,\chi}(\gamma,X)$ for a multiplier system $\chi$ of weight $\frac52-k$ and  $(\gamma,X)\in \Gamma_1^{J}$, and it forms a vector space over $\CC$.
A function $F$ on $\HH\times\CC$ is called a Jacobi integral of weight $\frac 52-k$, index $m$ and multiplier system $\chi$ on $\Gamma^{J}$ if it satisfies
\[
F-F|_{\frac52-k,m,\chi}(\gamma,X) \in P_{k,m}
\]
for all $(\gamma,X)\in\Gamma^J$.
 We let $E_{\frac52-k,m,\chi}(\Gamma^J)$ denote the space of holomorphic Jacobi integrals $F$ of weight $\frac52-k$, index $m$ and multiplier system $\chi$ on $\Gamma^J$ such that
\begin{enumerate}
\item functions $F$ are invariant under $|_{\frac52-k, m,\chi}(Q,0)$ for a generator $Q$ of $\Gamma_\infty$,
\item functions $H_m^{k-1}(F)$ can be written as
\[
H_m^{k-1}(F)  = G^* + H_m^{k-1}(H),
\]
where $G^*$ is a supplementary function to a skew-holomorphic Jacobi cusp form  $G \in J^{\sk,\cusp}_{k+\frac12,m,\chi}(\Gamma^J)$ and $H \in J^!_{\frac52-k,m,\chi}(\Gamma^J)$ (for the definition of a supplementary function, see Section \ref{section3.2}).
\end{enumerate}

We define the set
\[
\mc{N}_{\chi} := \{a\in\mc{N}|\ \chi(Q)e^{-2\pi i\lambda ma^2} = 0\},
\]
where $\mc{N} = (2m)^{-1}\ZZ/\ZZ$ and $Q = \sm 1&\lambda\\ 0&1\esm, \lambda>0$, is a generator of $\Gamma_\infty$.
The following theorem shows that we can understand holomorphic parts and non-holomorphic parts of harmonic Maass-Jacobi forms in terms of Jacobi integrals.

\begin{thm} \label{main2}  Let $\hat{J}^+_{\frac52-k, m,\chi}(\Gamma^J)$ and $E_{\frac52-k,m,\chi}(\Gamma^J)$ be as above.
\begin{enumerate}
\item For a positive integer $k$ with $k>2$
\begin{equation*}
E_{\frac52-k,m,\chi}(\Gamma^J)=\hat{J}^+_{\frac52-k, m,\chi}(\Gamma^J) + C_{m,\chi},
\end{equation*}
where $C_{m,\chi}$ is the space of  functions on $\HH\times\CC$ generated by $\theta_{m,a}$ for $a\in \mc{N}_{\chi}$ (for the definition of $\theta_{m,a}$ see Section \ref{section3.1}).

\item If $F\in \hat{J}^{\cusp}_{\frac52-k,m,\chi}(\Gamma^J)$, then
\begin{equation} \label{shadow}
F^- = \hat{\mc{E}}^J(G),
\end{equation}
where $G$ is a skew-holomorphic Jacobi cusp form in $J^{\sk,\cusp}_{k+\frac12, m,\chi}(\Gamma^J)$ and $\hat{\mc{E}}^J(G)$ is a non-holomorphic Jacobi integral associated with $G$ (for the precise definition of $\hat{\mc{E}}^J(G)$ see Section \ref{section3.2}).
\end{enumerate}
\end{thm}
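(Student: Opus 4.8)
The plan is to prove the two parts by analyzing the Fourier--Jacobi expansion of $F \in \hat{J}^{\cusp}_{\frac52-k,m,\chi}(\Gamma^J)$ and matching it term-by-term with the corresponding expansions of Jacobi integrals. First I would recall from \cite{BR} that any $F \in \hat{J}^{\cusp}_{\frac52-k,m,\chi}(\Gamma^J)$ which is holomorphic in $z$ has a decomposition $F = F^+ + F^-$ in which $F^+$ admits a Fourier expansion in terms of the standard holomorphic index-$m$ theta components, and $F^-$ is built from incomplete gamma functions attached to the Fourier coefficients of a skew-holomorphic Jacobi cusp form. The key structural input is that the heat operator $H_m$ and the operator $\xi_{\frac52-k,m}$ play, respectively, the roles of $D$ and $\xi_k$ in the elliptic theory; in particular, $\xi_{\frac52-k,m}(F) = G$ is a skew-holomorphic Jacobi cusp form, and the equation $H_m^{k-1}$ applied to the various pieces interacts with $P_{k-2,m}$ exactly as the polynomial period module does for Eichler integrals.

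For part (2), I would start from the known shape of $F^-$: writing out its Fourier--Jacobi expansion, each theta component of $F^-$ is a period-integral-type expression whose defining kernel is, up to normalization, $\int_{-\bar\tau}^{i\infty}$ of the complex conjugate of the corresponding component of $G$, twisted by the right power of $(\tau - \bar\tau)/2i$. One then checks that this is precisely the series $\hat{\mc{E}}^J(G)$ defined in Section \ref{section3.2} as the non-holomorphic Jacobi integral associated to $G$; this is essentially a matter of verifying that both sides have the same Fourier coefficients, which reduces to the classical identity relating the non-holomorphic Eichler integral of a cusp form to the corresponding period integral. Since $G = \xi_{\frac52-k,m}(F)$ and $\xi_{\frac52-k,m}$ annihilates the holomorphic part, the identification of $G$ as the relevant skew-holomorphic cusp form is automatic, so the content of (2) is the explicit coefficient comparison.

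For part (1), the inclusion $\hat{J}^+_{\frac52-k,m,\chi}(\Gamma^J) + C_{m,\chi} \subseteq E_{\frac52-k,m,\chi}(\Gamma^J)$ should follow by direct verification: if $F$ is the holomorphic part of a harmonic Maass-Jacobi form, then for $(\gamma,X) \in \Gamma^J$ the obstruction $F - F|_{\frac52-k,m,\chi}(\gamma,X)$ equals $-(F^- - F^-|_{\frac52-k,m,\chi}(\gamma,X))$ because $F^+ + F^-$ transforms invariantly, and by part (2) together with the properties of period functions of cusp forms this difference is a holomorphic function killed by $H_m^{k-1}$ satisfying the $\ZZ^2$-invariance, hence lies in $P_{k-2,m}$; the period relation for $\theta_{m,a}$ with $a \in \mc{N}_\chi$ is checked by a short computation using $\chi(Q)e^{-2\pi i \lambda m a^2} = 0$. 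The reverse inclusion is the substantive direction: given $F \in E_{\frac52-k,m,\chi}(\Gamma^J)$, condition (2) in the definition of $E$ gives a skew-holomorphic cusp form $G$ via the supplementary function $G^*$; I would form the candidate non-holomorphic partner $\hat{\mc{E}}^J(G)$ from part (2)'s formula and set $\tilde F := F + \hat{\mc{E}}^J(G)$, then show $\tilde F$ is a genuine harmonic Maass-Jacobi form in $\hat{J}^{\cusp}_{\frac52-k,m,\chi}(\Gamma^J)$ — i.e.\ that it transforms invariantly under all of $\Gamma^J$ and is annihilated by $C^{\frac52-k,m}$. Invariance under $\Gamma^J$ reduces, via the $H$-group structure and the fact that $F$ is already invariant under $\Gamma_\infty$, to checking that the cocycle contributions from $F$ and from $\hat{\mc{E}}^J(G)$ cancel modulo $P_{k-2,m}$, which is where the supplementary function theory of Section \ref{section3.2} (Knopp--Husseini style) does the real work: the supplementary function $G^*$ is precisely engineered so that its period polynomial matches that of the Eichler-type integral. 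Annihilation by $C^{\frac52-k,m}$ is a local computation using that $F$ is holomorphic and $\hat{\mc{E}}^J(G)$ has the explicit incomplete-gamma shape.

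\textbf{Main obstacle.} I expect the hard part to be the reverse inclusion in (1), specifically showing that $\tilde F = F + \hat{\mc{E}}^J(G)$ is genuinely $\Gamma^J$-invariant rather than merely invariant up to an element of $P_{k-2,m}$. This is exactly the point at which one needs the cohomological input: one must show that the period cocycle of $F$ (an element of the relevant $H^1$ with coefficients in $P_{k-2,m}$) is cohomologous to — in fact equal to — the cocycle produced by $\hat{\mc{E}}^J(G)$, and this is what the Eichler--Shimura-type argument together with the supplementary function construction is designed to deliver. Controlling the contribution of the exceptional theta terms $\theta_{m,a}$, $a \in \mc{N}_\chi$ (which account for the ``$+ C_{m,\chi}$'' and correspond to the failure of a naive isomorphism, analogous to the period polynomial ambiguity in Haberland's setup), is the delicate bookkeeping that must be handled carefully throughout.
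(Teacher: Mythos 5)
Your plan for the substantive direction of part (1) rests on a claim that is false in general: the function $\tilde F = F + \hat{\mc{E}}^J(G)$ need not be $\Gamma^J$-invariant, and the period cocycle of $F$ is \emph{not} equal to that of $-\hat{\mc{E}}^J(G)$ — it is only cohomologous to it. Concretely, condition (2) in the definition of $E_{\frac52-k,m,\chi}(\Gamma^J)$ only determines $F$ up to the kernel of $H_m^{k-1}$: from $H_m^{k-1}(F)=G^*+H_m^{k-1}(H)$ one gets $F=c_1\,\mc{E}^J(G^*)+H+R$, where $\mc{E}^J(G^*)$ is the termwise holomorphic Jacobi integral of the supplementary function and $R\in P_{k-2,m}$ is a priori arbitrary. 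The supplementary-function theory makes $c_1\,\mc{E}^J(G^*)-c_2\,\hat{\mc{E}}^J(G)$ exactly invariant, so the failure of invariance of your $\tilde F$ is precisely the coboundary of $R$; demanding exact equality of the cocycles of $F$ and $\hat{\mc{E}}^J(G)$ is therefore an unreachable target. The missing step — and the reason the theorem reads $\hat{J}^+_{\frac52-k,m,\chi}(\Gamma^J)+C_{m,\chi}$ rather than $\hat{J}^+_{\frac52-k,m,\chi}(\Gamma^J)$ — is to invoke condition (1) of the definition of $E$ (invariance under the parabolic generator $Q$): since $F$, $\mc{E}^J(G^*)$ and $H$ are all $Q$-invariant, so is $R$, and a $Q$-invariant element of $P_{k-2,m}$ must have constant theta-coefficients supported on $a\in\mc{N}_\chi$, i.e.\ $R\in C_{m,\chi}$. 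This is exactly how the paper concludes (it carries out the whole argument after transporting $F$ to a vector-valued Eichler integral via the theta expansion and quoting the supplementary-function and Eichler-integral results of \cite{CL}, \cite{CKL}; the ``vector-valued constant $c$'' there is your $R$), and without this step your argument cannot close, nor can it account for the $C_{m,\chi}$ summand other than as ``bookkeeping''.

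There is also a gap in the inclusion you call routine: membership of $F^+$ in $E_{\frac52-k,m,\chi}(\Gamma^J)$ is not only the cocycle condition you check, but also condition (2), i.e.\ one must \emph{produce} a decomposition $H_m^{k-1}(F^+)=G^*+H_m^{k-1}(H)$ with $G^*$ supplementary to a skew-holomorphic Jacobi cusp form and $H\in J^{!}_{\frac52-k,m,\chi}(\Gamma^J)$. This requires the construction the paper carries out: with $g=\xi_{2-k}(f)$ the (vector-valued) shadow and $g^*$ its supplementary function, one shows $b=\mc{E}(g^*)-\hat{\mc{E}}(g)$ is harmonic and that $h:=f+\frac{(k-2)!}{(4\pi)^{k-1}}b$ is weakly holomorphic, whence $H_m^{k-1}(F^+)$ decomposes as required; none of this is direct verification of transformation laws. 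By contrast, your plan for part (2) — termwise comparison of the incomplete-Gamma expansion of $F^-$ with $\hat{\mc{E}}^J(G)$ for $G$ a suitable multiple of $\xi_{\frac52-k,m}(F)$ — is sound in principle and is essentially the componentwise computation the paper imports from \cite{CKL} through the theta expansion, up to fixing the normalizing constants.
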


For $F\in \hat{J}^{\cusp}_{\frac52-k,m,\chi}(\Gamma^J)$, $F^+$ is called a mock Jacobi form.
If a skew-holomorphic Jacobi cusp form $G$ satisfies (\ref{shadow}), then $G$ is called the shadow of a mock Jacobi form $F^+$ (see  \cite{BR} and \cite{ABS}).

\begin{rmk}
\begin{enumerate}
\item The main part of the proof of Theorem \ref{main2} is the construction of a harmonic Maass-Jacobi form from a given Jacobi integral $F\in E_{\frac52-k,m,\chi}(\Gamma^J)$.
\item An Eichler integral, which was first studied by Eichler \cite{Eic} in  connection with the Eichler-Shimura cohomology theory, is an important tool to understand a harmonic weak Maass form.
More precisely, any harmonic weak Maass form can be written as a sum of holomorphic part and non-holomorphic part and
both of these two parts are Eichler integrals of modular forms with the dual weight (for example see \cite{CKL}).
Theorem \ref{main2} is analogue of this in the case of Jacobi forms.
\end{enumerate}
\end{rmk}

Haberland \cite{Hab} established a formula to express the Petersson inner product of two cusp forms on $\Gamma_1$ as the sum of their special $L$-values.
Kohnen and Zagier \cite{KZ} interpreted this formula by using period polynomials, and Pa\c{s}ol and Popa \cite{PP} generalized this approach to arbitrary modular forms for congruence subgroups.
Recently, Bringmann, Guerzhoy, Kent and Ono \cite{BGK} gave interesting relations between period functions of harmonic weak Maass forms, which follow from the connection between the obstruction to modularity and the period functions of the associated Eichler integral.
With the notion of partial $L$-functions of a skew-holomorphic Jacobi cusp form, we obtain the similar result in the context of Jacobi forms.

We construct a pairing $\{\ ,\ \}$ on mock Jacobi forms and prove that the pairing $\{\ ,\ \}$ can be written as the Petersson inner product of their shadows.
Suppose that $\Gamma$ contains $-I$.
For skew-holomorphic Jacobi cusp forms $F$ and $G$ of weight $k$ the Petersson inner product is defined by
\[
(F,G) := \frac{1}{[\Gamma_1:\Gamma]} \int_{\Gamma^J\setminus \HH\times\CC} F(\tau,z)\overline{G(\tau,z)} v^{k-3}e^{-4\pi my^2/v}dxdydudv.
\]

Now we consider a vector-valued Jacobi integral and explain how to define a pairing on mock Jacobi forms.
Note that $P_{k-2,m}$ is a $\Gamma^J$-module.
Let $\widetilde{P_{k-2,m}}$ be the induced $\Gamma_1^J$-module $\Ind^{\Gamma_1^J}_{\Gamma^J}(P_{k-2,m})$.
If the slash operator $|_{\frac52-k,m,\chi}$ can be extended to $\Gamma_1^J$, then  $P_{k-2,m}$ is also a $\Gamma_1^J$-module and we can identify $\widetilde{P_{k-2,m}}$ with the space of maps $P: \Gamma\setminus \Gamma_1 \to P_{k-2,m}$ with $\Gamma_1^J$ action
\[
(P\|_{\frac52-k, m,\chi} (\gamma,X))(A) := P(A\gamma^{-1})|_{\frac52-k,m,\chi}(\gamma,X)
\]
for $(\gamma,X)\in \Gamma_1^J$ and $A\in\Gamma_1$.


By Lemma 3.1 in \cite{CL1}, every function $G\in P_{k-2,m}$ can be written as
\[
G(\tau,z) = \sum_{a\in\mathcal{N}} g_a(\tau)\theta_{m,a}(\tau,z),
\]
where $g_a$ is a polynomial of degree at most $k-2$ for each $a\in\mathcal{N}$.
Let
\[
G^c(\tau,z) = \sum_{a\in\mathcal{N}} \overline{g_a(\overline{\tau})}\theta_{m,a}(\tau,z).
\]
On $P_{k-2,m}\times P_{k-2,m}$ we have a pairing
\begin{equation} \label{pairing1}
\bigg\langle \sum_{a\in\mathcal{N}} f_a(\tau)\theta_{m,a}(\tau,z), \sum_{a\in\mathcal{N}} g_a(\tau)\theta_{m,a}(\tau,z) \bigg\rangle := \sum_{a\in\mathcal{N}} \sum_{n=0}^{k-2} (-1)^{k-2-n}\mat k-2\\n\emat^{-1} A_{a,n} B_{a,k-2-n},
\end{equation}
where $f_a(\tau) = \sum_{n=0}^{k-2} A_{a,n}\tau^n$ and $g_a(\tau) = \sum_{n=0}^{k-2}B_{a,n}\tau^n$.
With the pairing in (\ref{pairing1}) we define a pairing on $\widetilde{P_{k,m}} \times \widetilde{P_{k,m}}$ by
\[
\lla P,Q\rra := \frac{1}{[\Gamma_1:\Gamma]} \sum_{A\in\Gamma\setminus\Gamma_1} \la P(A), Q(A) \ra
\]
for $P,Q \in \widetilde{P_{k,m}}$.

Suppose that $F\in \hat{J}^{\cusp}_{\frac52-k,m,\chi}(\Gamma^J)$.
For each $\gamma\in\Gamma_1$ we have a unique decomposition
\[F|_{\frac52-k, m,\chi}(\gamma,0) = F_\gamma^+ + F_\gamma^-,\]
where $F_\gamma^+$ is the holomorphic part and $F_\gamma^-$ is the non-holomorphic part.
Let $A_{\frac52-k, m,\chi}(\Gamma^J)$ be the space of Jacobi integrals of weight $\frac52-k$, index $m$ and multiplier system $\chi$ on $\Gamma^J$  which have a theta expansion.
We define a vector-valued Jacobi integral $\widetilde{F^+}:\Gamma\setminus\Gamma_1\to A_{\frac52-k,m,\chi}(\Gamma^J)$ associated with $F^+$ by
\begin{equation} \label{plus1}
\widetilde{F^+}(\gamma) = F_\gamma^+.
\end{equation}
Its period function is given by
\begin{equation} \label{plus2}
P(F^+) = \widetilde{F^+} - \widetilde{F^+}\|_{\frac52-k,m,\chi}(S,0),
\end{equation}
where $S = \sm 0&-1\\1&0\esm$.
With this we define a pairing on $\hat{J}^{+}_{\frac52-k,m,\chi}(\Gamma^J) \times \hat{J}^{+}_{\frac52-k,m,\chi}(\Gamma^J)$  by
\begin{equation} \label{pairing}
\{F^+,G^+\} = \lla P(F)\|_{\frac52-k,m,\chi}((T^{-1},0)-(T,0)), P(G)^c\rra,
\end{equation}
where 
$T = \sm 1&1\\ 0&1\esm$.
This is analogous with the pairing defined by Pa\c{s}ol and Popa \cite{PP} for elliptic modular forms.
The following theorem shows that this pairing can be written in terms of the Petersson inner product of skew-holomorphic Jacobi cusp forms, and it can also be expressed as the sum of partial $L$-values of skew-holomorphic Jacobi cusp forms.

\begin{thm} \label{Haberland}
Let $k$ be a positive integer with $k\geq2$ and $\chi$ be a multiplier system of weight $k+\frac12$ on $\Gamma_1$.
Suppose that $\Gamma$ contains $-I$.
For $F,G\in \hat{J}^{\cusp}_{\frac52-k,m,\chi}(\Gamma^J)$ let $F'$ and $G'$ be the shadows of $F^+$ and $G^+$, respectively. Then
\begin{align} \label{peter}
&-6\sqrt{4m}(2i)^{k-1}[\Gamma_1:\Gamma](F',G') = \{F^+,G^+\}\\
\nonumber & \qquad =  \sum_{A\in\Gamma\setminus\Gamma_1}\sum_{a\in\mathcal{N}} \sum_{0\leq m+n\leq k-2\atop m\neq\equiv n\ (\m\ 2)} \frac{(k-2)!}{(k-2-m-n)!}\frac{i^{m+n}}{(2\pi)^{m+n+2}}\\
\nonumber&\qquad \times\biggl((-1)^{k-2-m} L^{\sk}(G'|^{\sk}_{k+\frac12,m,\chi}A,a,n+1)\overline{L^{\sk}(F'|^{\sk}_{k+\frac12,m,\chi}AT,a,m+1)}\\
\nonumber&\qquad \qquad-(-1)^m L^{\sk}(G'|^{\sk}_{k+\frac12,m,\chi}A,a,n+1)\overline{L^{\sk}(F'|^{\sk}_{k+\frac12,m,\chi}AT^{-1},a,m+1)}\biggr) .
\end{align}
\end{thm}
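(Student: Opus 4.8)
The plan is to read \eqref{peter} as two independent identities and, in both, to pull the coset average out first. Since $(F',G')$ carries the normalization $\frac{1}{[\Gamma_1:\Gamma]}$ and the pairing $\lla\,,\,\rra$ carries $\frac{1}{[\Gamma_1:\Gamma]}\sum_{A\in\Gamma\setminus\Gamma_1}$, and since $\widetilde{F^+}$ and $\widetilde{G^+}$ encode the $\Gamma$-level forms as maps on $\Gamma\setminus\Gamma_1$, the sum over cosets can be extracted at the outset, so that the essential content is an identity for functions transforming under $\Gamma_1^J=\SL_2(\ZZ)\ltimes\ZZ^2$, a group whose relations are generated by $S$ and $T$ (the constant $6$ in \eqref{peter} being the bookkeeping factor attached to the fundamental-domain combinatorics of $\Gamma_1$). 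The hypothesis $k\ge 2$ ensures that the relevant period objects are honest elements of $P_{k-2,m}$, i.e. vectors of polynomials of degree $\le k-2$.

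For the first equality I would start from
\[
(F',G') = \frac{1}{[\Gamma_1:\Gamma]}\int_{\Gamma^J\setminus\HH\times\CC} F'(\tau,z)\,\overline{G'(\tau,z)}\, v^{k-5/2}e^{-4\pi my^2/v}\,dx\,dy\,du\,dv ,
\]
and invoke Theorem \ref{main2}(2): because $F^-=\hat{\mc{E}}^J(F')$, the operator $\xi_{\frac52-k,m}$ (a power of $\frac{\tau-\bar\tau}{2i}$ times $D^{(m)}_-$) sends $F$ to the shadow $F'$ up to an explicit constant and annihilates $F^+$, so on the fundamental domain $F'$ may be replaced by a constant multiple of $D^{(m)}_-F^-$. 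This exhibits the integrand, up to that constant, as the restriction of an exact $3$-form $d\omega$ with coefficients bilinear in $F^-$ (resp. $F$) and $\overline{G'}$; carrying out the $z$-integration first against the Gaussian produces the theta-coefficient structure of $P_{k-2,m}$ and the factor $\sqrt{4m}$ coming from $\int_{\RR}e^{-4\pi my^2/v}\,dy=\sqrt{v/4m}$. Applying Stokes' theorem on a truncated fundamental domain of $\Gamma_1^J$ and letting the truncation tend to the cusp, the interior term vanishes because $F'$ and $G'$ satisfy the skew-holomorphic (heat-operator) equations, while the boundary contributions along the $S$- and $T$-identified arcs are period integrals; inserting the cocycle relation $F-F|_{\frac52-k,m,\chi}(\gamma,X)\in P_{k-2,m}$ converts them into $\lla P(F)\|_{\frac52-k,m,\chi}\bigl((T^{-1},0)-(T,0)\bigr),\,P(G)^c\rra$, which by \eqref{pairing} equals $\{F^+,G^+\}$. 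Tracking the powers $(\tau-\bar\tau)^{k-2}=(2iv)^{k-2}$ and the Jacobian through the computation accounts for the remaining factor $-6(2i)^{k-1}$.

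For the second equality I would compute the period polynomials explicitly in terms of partial $L$-values. For a skew-holomorphic Jacobi cusp form with theta expansion $G'=\sum_{a\in\mc{N}}g'_a\theta_{m,a}$, a Mellin-transform computation of the integral of $G'|^{\sk}_{k+\frac12,m,\chi}A$ along the geodesic from $0$ to $i\infty$, paired against $\tau^n$, yields $\frac{i^n}{(2\pi)^{n+1}}$ times a falling factorial times $L^{\sk}(G'|^{\sk}_{k+\frac12,m,\chi}A,a,n+1)$ for $0\le n\le k-2$ --- the Jacobi analogue of the classical identification of period-polynomial coefficients with critical $L$-values. Substituting these into the combinatorial pairing \eqref{pairing1}, the inverse binomial weights $\binom{k-2}{n}^{-1}$ combine with the falling factorials and with the binomial coefficients introduced by the shifts $T^{\pm1}$ to give the coefficient $\frac{(k-2)!}{(k-2-m-n)!}\frac{i^{m+n}}{(2\pi)^{m+n+2}}$; summing over $A\in\Gamma\setminus\Gamma_1$ and $a\in\mc{N}$ via the definition of $\lla\,,\,\rra$ produces the stated double sum. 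The two summands --- carrying $AT$ versus $AT^{-1}$ and the opposite signs $(-1)^{k-2-m}$ and $(-1)^m$ --- come respectively from the terms $(T^{-1},0)$ and $(T,0)$ in \eqref{pairing}, and the parity restriction $m\not\equiv n\pmod 2$ is forced by the alternating sign $(-1)^{k-2-n}$ in \eqref{pairing1} together with the action of $-I\in\Gamma$ through $\chi$, which annihilates the equal-parity cross terms.

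The main obstacle will be the Stokes-theorem step of the first equality. In contrast with the elliptic case, this is an integration by parts in four real variables in which the heat operator $H_m=8\pi im\partial_\tau-\partial_{zz}$ and the mixed-derivative operator $D^{(m)}_-$ intervene, so one must check carefully both that the interior contributions genuinely collapse --- using that $H_m^{k-1}$ annihilates the elements of $P_{k-2,m}$ and that $G'$ is skew-holomorphic --- and that no boundary term is lost in the $z$-direction. Moreover, Jacobi integrals do not decay at the cusp the way cusp forms do, so the truncation limit must be handled by a regularization in which the invariance of $F$ under $|_{\frac52-k,m,\chi}(Q,0)$ and the vanishing encoded by $\mc{N}_\chi$ are used to cancel the divergent parts. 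A shortcut I would also pursue is to pass to the theta decomposition from the outset, reducing both sides of \eqref{peter} to the Haberland/Pa\c{s}ol--Popa identity for vector-valued modular forms of half-integral weight on $\Gamma_1$; the constant $\sqrt{4m}$ would then be precisely the normalization relating the Jacobi Petersson product to the vector-valued one, and the delicate convergence issues would be inherited from the elliptic theory.
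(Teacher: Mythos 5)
Your second half (extracting the $L$-values) is essentially the paper's computation: the period coefficients of $\widetilde{\mathcal{E}^J}(F')$, $\widetilde{\hat{\mathcal{E}}^J}(F')$ are Mellin transforms of the theta components, and substituting into the pairing \eqref{pairing1} with the $(T^{-1},0)-(T,0)$ shifts gives the stated coefficients and parity restriction. For the first equality, however, note that what you call a ``shortcut'' is in fact the paper's entire argument, and your primary route has a genuine gap. The paper never integrates by parts against the harmonic Maass--Jacobi form $F$ on $\HH\times\CC$: it theta-decomposes $F'$, $G'$ at once, uses $\int_{\CC/\ZZ\tau+\ZZ}\theta_{m,a}\overline{\theta_{m,b}}e^{-4\pi my^2/v}\,dx\,dy=\sqrt{v/4m}\,\delta_{ab}$ to reduce $(F',G')$ to the Petersson product of the vector-valued \emph{cusp} forms $f',g'$, and then runs the classical Haberland/Pa\c{s}ol--Popa Stokes argument in the single variable $\tau$ over a fundamental domain of $\Gamma(2)$ (six copies of the one for $\Gamma_1$, with vertices $i\infty,-1,0,1$). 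It is precisely this choice of domain that produces the constant $6$ and, after cancellation of the vertical sides and the change of variables on the arcs from $\pm1$ to $i\infty$, the specific combination $\lla P(\cdot)^c\|_{\frac52-k,m,\chi}((T^{-1},0)-(T,0)),P(\cdot)\rra$. Finally the bridge from the shadows back to the mock parts is Lemma \ref{periodrelation} together with $P(F^+)=-P(F^-)$ (modular invariance of $F=F^++F^-$), which you only gesture at via ``inserting the cocycle relation.''

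The gap in your main route is twofold. First, the regularized Stokes computation in four real variables, with $D^{(m)}_-$ and the heat operator intervening and with $F^+$ allowed exponential growth at the cusp, is exactly the hard analytic step you flag but do not carry out; the paper's proof is designed so that only cuspidal objects are ever integrated over the (truncated) domain, so no regularization is needed. Second, even granting that Stokes step, integrating over a fundamental domain of $\Gamma_1^J$ yields boundary identifications by $S$ and $T$, which does not by itself produce the pairing against $P(G)^c$ shifted by $(T^{-1},0)-(T,0)$, nor the factor $6$; attributing these to ``fundamental-domain combinatorics of $\Gamma_1$'' hides the actual mechanism, which is the passage to the $\Gamma(2)$ domain (or an equivalent Kohnen--Zagier/Pa\c{s}ol--Popa manipulation). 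So as written the primary route would not arrive at the stated normalization of \eqref{peter}; you should either carry out the $\Gamma(2)$-domain argument at the Jacobi level or simply adopt the reduction to vector-valued forms from the outset, as the paper does.
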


Here $L^{\sk}(F',a,s)$ is a partial $L$-function of a skew-holomorphic Jacobi cusp form $F'\in J^{\sk,\cusp}_{k+\frac51, m,\chi}(\Gamma^J)$ defined as follows.
Suppose that $F'$ has the Fourier expansion of the form
\[
F'(\tau,z) = \sum_{4m(n+\kappa)-\lambda r^2<0}c\biggl(\frac{n+\kappa}\lambda,r\biggr)e^{\frac{-\pi(\lambda r^2-4m(l+\kappa))v}{m\lambda}}q^{(n+\kappa)/\lambda}\zeta^{r}.
\]
By the theta expansion we have
\[
F'(\tau,z)  = \sum_{\mu (\m\ 2m)} \sum_{r\equiv \mu (\m 2m)} \sum_{n\in\ZZ\atop M(n)>0} \overline{D_\mu(M(n))q^{\frac{M(n)}{4m}}}q^{\frac{r^2}{4m}}\zeta^r,
\]
where $M(n) = r^2 - \frac{4m(l+\kappa)}{\lambda}$ and $D_\mu(M(n)) = \overline{c(\frac{r^2-M(n)}{4m},r)}$.
Then partial $L$-functions of $F'$ are defined by
\[
L^{\sk}(F',a,s) = \sum_{n\in\ZZ\atop M(n)>0} \frac{D_\mu(M(n)) } {(M(n)/4m)^s}.
\]

\begin{ex}
Suppose that $m=1$ and $\Gamma = \Gamma_1$.
Let
\[
W_{\frac52-k,m,\chi} := \bigg\{P\in P_{k-2,m}\bigg|\ P|_{\frac52-k,m,\chi}(1+S) = P|_{\frac52-k,m,\chi}(1+U+U^2) = 0\bigg\},
\]
where $U = TS$.
By the Eichler-Shimura cohomology theory for Jacobi forms we have an isomorphism (see \cite{CL1})
\begin{equation} \label{iso1}
H^1_{\frac52-k,m,\chi}(\Gamma^J, P_{k-2,m}) \cong S_{k+\frac12,m,\chi}(\Gamma^J) \oplus J^{\sk,\cusp}_{k+\frac12,m,\chi}(\Gamma^J).
\end{equation}
As in the case of elliptic modular forms, the isomorphism (\ref{iso1}) induces the  isomorphism
\begin{equation} \label{iso2}
W_{\frac52-k,m,\chi} \cong S_{k+\frac12,m,\chi}(\Gamma^J) \oplus J^{\sk,\cusp}_{k+\frac12,m,\chi}(\Gamma^J).
\end{equation}

Note that in the case $\Gamma = \Gamma_1$ there are exactly $6$ multiplier systems for each weight and they are given in terms of the Dedekind eta function $\eta$, which is given by
\[
\eta(\tau) = q^{\frac1{24}}\prod_{n=1}^\infty (1-q^n),
\]
where $q= e^{2\pi i\tau}$.
We define a multiplier system $\chi_i$ by
\[
\chi_i(\gamma) = \frac{\eta^i(\gamma\tau)}{(c\tau+d)^{\frac i2}\eta^i(\tau)}
\]
for $\gamma = \sm a&b\\c&d\esm\in\Gamma_1$.
Note that $\chi_i = \chi_j$ if $i\equiv j\ (\m\ 24)$ since $\chi_{24}$ is trivial.
If the weight  $k$ is in $\frac12\ZZ$, then the corresponding multiplier systems are
\[
\{\chi_i|\ i\equiv 2k\ (\m\ 4)\}.
\]

We  compute $W_{\frac52-k,m,\chi_i}$ in the case when $k = 2,3$ and $4$.
If $k=2$, then $P\in P_{k-2,m}$ can be written as $a\theta_{2,0} + b\theta_{2,1}$ for $a,b\in\CC$.
The dimension of $W_{\frac52-k, m,\chi_i}$ is zero when $i = 1,5,13$ and $17$, and it is $1$ when $i =9$ and $21$.
The following table shows the basis of $W_{\frac52-k, m,\chi_i}$ when $i=9$ and $21$.
\begin{table}[ht]\footnotesize\tabcolsep=0.21cm
\caption {Basis of $W_{\frac12, m,\chi_i}$ }
\begin{tabular}{c c c c c c c}
\  & $a$ & $b$  \\ \hline \noalign{\smallskip}
$\chi_9$ & $-\sqrt{2}+1$ & $1$\\
$\chi_{21}$ &  $\sqrt{2}+1$           &  $1$
\end{tabular}
\end{table}

If $k=3$, then $W_{\frac52-k, m,\chi_i}$ is zero for all possible multiplier systems.
If $k=4$, then $P\in P_{k-2,m}$ can be written as $(a_2\tau^2 + a_1\tau +a_0)\theta_{2,0}(\tau,z)+ (b_2\tau^2 + b_1\tau + b_0)\theta_{2,1}(\tau,z)$ for $a_i,b_i\in\CC$.
The dimension of $W_{\frac52-k, m,\chi_i}$ is $1$ for $i = 1,5,9,13,17$ and $21$.
The following table shows the basis of $W_{\frac52-k, m,\chi_i}$.
\begin{table}[ht]\footnotesize\tabcolsep=0.21cm
\caption {Basis of $W_{-\frac32, m,\chi_i}$}
\begin{tabular}{c c c c c c c}
\  & $a_2$ & $a_1$ & $a_0$ & $b_2$ & $b_1$ & $b_0$ \\ \hline \noalign{\smallskip}
$\chi_1$ & $-37.840070\cdots  $ & $(22.519313 \cdots)i$ & $52.513940\cdots$ & $-36.425856\cdots$ & $(9.3278049 \cdots)i$ & $1$\\  \noalign{\smallskip}

$\chi_5$ & $0.69364166\cdots$ & $(0.17762531\cdots )i$ & $-0.019042563\cdots$ & $-0.72057191\cdots$ & $-(0.42882543 \cdots)i$ & $1$\\  \noalign{\smallskip}

$\chi_9$ & $-1$ & $i$ & $0.41421356\cdots$ & $0.41421356\cdots$ & $(0.41421356\cdots)i$ & $1$ \\  \noalign{\smallskip}

$\chi_{13}$ & $0.51956178\cdots$ & $-(0.66290669  \cdots)i$ & $-0.26522868\cdots$           & $-0.89465178\cdots$ & $(1.6003983 \cdots)i$  & $1$ \\  \noalign{\smallskip}

$\chi_{17}$ & $-3.3731336\cdots$ & $-(6.0340318  \cdots)i$ & $3.7703313\cdots$ & $-1.9589200\cdots$ & $-(2.4993778\cdots)i$ & $1$\\  \noalign{\smallskip}

$\chi_{21}$ & $-1$ & $i$ & $-2.4142136\cdots$ & $-2.4142136\cdots$ & $-(2.4142136\cdots)i$ & $1$
\end{tabular}
\end{table}

By the definition of the pairing $\{\ ,\ \}$, we can compute the value in (\ref{peter}) using the coefficients appearing in period functions of $F'$ and $G'$.
For example, by Theorem 3.4 in \cite{EZ}, one can check that $\dim S_{4, 1,\chi_{triv}}(\Gamma^J)$ is zero, where $\chi_{triv}$ denotes the trivial character. This implies that $\dim S_{\frac52,1,\chi_{21}}(\Gamma^J)$  is also zero because the multiplication by $\eta^3$ gives an injective map from $S_{\frac52,1,\chi_{21}}(\Gamma^J)$ to $S_{4, 1,\chi_{triv}}(\Gamma^J)$.
 Therefore, there is a skew-holomorphic Jacobi cusp form $F'$ such that the corresponding period function $P(\widetilde{\hat{\mathcal{E}}^J}(F'))$   is $(\sqrt{2}+1)\theta_{2,0} + \theta_{2,1}$ by the isomorphism (\ref{iso2}) (for the definition of $P(\widetilde{\hat{\mathcal{E}}^J}(F'))$  see Section \ref{section4}). Then
\[
-24i(F',F') = \{F^+,F^+\} = -(4\sqrt{2}+4)i,
\]
where $F^+$ is a mock Jacobi form whose shadow is $F'$.
By the same way, there is a skew-holomorphic Jacobi cusp form $F'$ whose period function is
\begin{align*}
&(-(3.3731336\cdots)\tau^2 -(6.0340318\cdots)i\tau + 3.7703313\cdots)\theta_{2,0}(\tau,z)\\ &\qquad+(-(1.9589200\cdots)\tau^2 -(2.4003778\cdots)i\tau +1)\theta_{2,1}(\tau,z).
\end{align*}
Then
\[96i(F',F') = \{F^+,F^+\} = (100.656302\cdots)i.\]
We can also find a skew-holomorphic Jacobi cusp form $F'$ whose period function is
\[
(-\tau^2+i\tau-(2.4142136\cdots))\theta_{2,0}(\tau,z)+ (-(2.4142136\cdots)\tau^2-(2.4142136\cdots)i\tau+1)\theta_{2,1}(\tau,z).
\]
In this case we obtain
\[96i(F',F') = \{F^+,F^+\} = (14.485281\cdots)i.\]
\end{ex}

This paper is organized as follows. In Section \ref{section2}, we introduce
vector-valued modular forms and vector-valued harmonic weak Maass forms.
In Section \ref{section3},
we give basic notions of Jacobi forms, skew-holomorphic Jacobi forms and  harmonic Maass-Jacobi forms.
In Section \ref{section4}, we prove the main theorems:
Theorem \ref{main2} and \ref{Haberland}.

\section{Vector-valued modular forms} \label{section2}
In this section, we recall basic notions of vector-valued modular forms and vector-valued harmonic weak Maass forms. For details, consult \cite{KM0} for vector-valued modular forms and \cite{BF, BOR} for vector-valued harmonic weak Maass forms.

\subsection{Vector-valued modular forms} \label{section2.1}
We start with the definition of vector-valued modular forms.
First, we set up some notations.
Let $\Gamma\subset\Gamma_1$ be a $H$-group. Let $k\in\ZZ$ and let $\chi$ be a character of $\Gamma$. We suppose that  $[\Gamma_1:\Gamma]<\infty$.
Let $p$ be a positive integer and $\rho:\Gamma\to \GL(p,\CC)$ a $p$-dimensional unitary complex representation.  We denote the standard basis elements of the vector space $\CC^p$ by $\mbf{e}_j$ for $1\leq j\leq p$.
If $f = \sum_{j=1}^p f_j \mbf{e}_j$ is a vector-valued function on $\HH$, then the slash operator $|_{k,\chi,\rho} \gamma$ is defined by
\[
(f|_{k,\chi,\rho}\gamma)(\tau) := \chi(\gamma)^{-1}(c\tau+d)^{-k}\rho^{-1}(\gamma)f(\gamma\tau)
\]
for $\gamma = \sm a&b\\c&d\esm \in \Gamma$, where $\gamma\tau := \frac{a\tau+b}{c\tau+d}$.

\begin{dfn} \label{dfnofvvm} A vector-valued weakly holomorphic modular form of weight $k$, character $\chi$ and type $\rho$ on $\Gamma$ is a sum $f = \sum_{j=1}^p f_j \mbf{e}_j$ of  holomorphic functions in $\HH$ satisfying the followings.
\begin{enumerate}
\item For all $\gamma \in\Gamma$,
$f|_{k,\chi,\rho} \gamma = f$.

\item For each $\gamma = \sm a&b\\c&d\esm \in \Gamma_1$ the function $(c\tau+d)^{-k}f(\gamma\tau)$ has the Fourier expansion of the form
\[
\sum_{j=1}^p\sum_{n\gg-\infty} a_{j,\gamma}(n)e^{2\pi i(n+\kappa_{j,\gamma})\tau/\lambda_{\gamma}}\mbf{e}_j,
\]
where $\kappa_{j,\gamma}$ (resp. $\lambda_{\gamma}$) is a constant which depends on $j$ and $\gamma$ (resp. $\gamma$).
\end{enumerate}
\end{dfn}

We denote by $M^!_{k,\chi,\rho}(\Gamma)$  the space of all vector-valued weakly holomorphic modular forms of weight $k$, character $\chi$ and type $\rho$ on $\Gamma$. There are subspaces $M_{k,\chi,\rho}(\Gamma)$ and $S_{k,\chi,\rho}(\Gamma)$ of vector-valued modular forms and vector-valued cusp forms, respectively, for which we require that each $a_{j,q}(n) = 0$ when $n+\kappa_{j,q}$ is negative, respectively, non-positive.

Now we introduce the definition of vector-valued harmonic weak Maass forms and related differential operators. To define vector-valued harmonic weak Maass forms, we need  the weight $k$ hyperbolic Laplacian given by
\[
\Delta_k := -v^2\biggl(\frac{\partial^2}{\partial u^2}+\frac{\partial^2}{\partial v^2}\biggr) + ikv\biggl(\frac{\partial}{\partial u}+i\frac{\partial}{\partial v}\biggr).
\]

\begin{dfn} \label{dfnofhar} A vector-valued harmonic weak Maass form of weight $k$, character $\chi$ and type $\rho$ on $\Gamma$ is a sum $f = \sum_{j=1}^p f_j\mbf{e}_j$ of smooth functions on $\HH$ satisfying
\begin{enumerate}
\item[(1)] $f|_{k,\chi,\rho}\gamma = f$ for all $\gamma\in\Gamma$,
\item[(2)] $\Delta_k f =0$,
\item[(3)] there is a constant $C>0$ such that
\[
(c\tau+d)^{-k}f_j(\gamma\tau) = O(e^{Cv}),
\]
as $v\to\infty$ uniformly in $u$ for every integer $1\leq j\leq p$ and every element $\gamma=\sm a&b\\c&d\esm\in \Gamma_1$.
\end{enumerate}
We write $H_{k,\chi,\rho}(\Gamma)$ for the space of vector-valued harmonic weak Maass forms of weight $k$, character $\chi$ and type $\rho$ on $\Gamma$.
\end{dfn}

It is  known that any harmonic weak Maass form $f$  has a unique decomposition $f = f^+ + f^-$ (see \cite[Section 3]{BF}). Then the first (resp. second) summand is called the holomorphic (resp. non-holomorphic) part of $f$.
In the theory of vector-valued harmonic weak Maass forms, there are two important differential operators $\xi_{-k}$ and $D^{k+1}$. The differential operator $\xi_{-k}:= 2iv^{-k}\overline{\partial_{\overline{\tau}}}$ gives an anti-linear map (see Proposition 3.2  in \cite{BF})
\[
\xi_{-k} : H_{-k,\chi,\rho}(\Gamma) \to M^!_{k+2,\bar{\chi},\bar{\rho}}(\Gamma).
\]
Another differential operator $D^{k+1} := ( \frac{1}{2\pi i}\partial_{\tau})^{k+1}$
 gives  a linear  map (see Theorem 1.2 in \cite{BOR})
\[
D^{k+1}: H^*_{-k,\chi,\rho}(\Gamma) \to M^!_{k+2,\chi,\rho}(\Gamma).
\]
We let $H^*_{-k,\chi,\rho}(\Gamma)$ denote the inverse image of the space of cusp forms  under the map $\xi_{-k}$.

\subsection{Eichler integrals and harmonic weak Maass forms} \label{section2.3}
In this subsection, we recall the basic notion of Eichler integrals.
We denote the space of holomorphic parts of $f\in H^*_{-k,\chi,\rho}(\Gamma)$ by $H^+_{-k,\chi,\rho}(\Gamma)$.
On the other hand, we let $E_{-k,\chi,\rho}(\Gamma)$ denote the space of holomorphic vector-valued Eichler integrals $f$ of weight $-k$, character $\chi$ and type $\rho$ on $\Gamma$ such that
\begin{enumerate}
\item[(1)] vector-valued functions $f$ are invariant under $|_{-k,\chi,\rho}Q$,
\item[(2)] vector-valued functions $D^{k+1}(f)$ can be written as
\[
D^{k+1}(f) = g^* + D^{k+1}(h),
\]
where $g^*$ is a supplementary function to a cusp form $g\in S_{k+2,\bar{\chi},\bar{\rho}}(\Gamma)$ and $h \in M^!_{-k,\chi,\rho}(\Gamma)$ (for the definition of the supplementary function see \cite[Section 2.3]{CL}).
\end{enumerate}

We have two different Eichler integrals associated with a vector-valued cusp form $f$.
For $f\in S_{k,\chi,\rho}(\Gamma)$ define
\[
\mathcal{E}(f)(\tau) := \int^{i\infty}_{\tau} f(t)(t-\tau)^{k-2}dt
\]
and
\[
\hat{\mc{E}}(f)(\tau) := \overline{\int^{i\infty}_{\tau}f(t)(t-\overline{\tau})^{k}dt}.
\]
A holomorphic Eichler integral $\mathcal{E}(f)$ can be extended to a vector-valued weakly holomorphic modular form $f$ by considering term-by-term integration of the Fourier expansion of $f$ (for more details see \cite[Section 2.3]{CL}).





\section{Jacobi forms and harmonic Maass-Jacobi forms } \label{section3}

In this section, we review basic notions of Jacobi forms (see \cite{EZ, Z0}),  skew-holomorphic Jacobi forms (see \cite{Sko1,Sko2}) and harmonic Maass-Jacobi forms (see \cite{BR}).

\subsection{Jacobi forms} \label{section3.1}
In this subsection, we introduce Jacobi forms and theta expansions. First we fix some notations.
Let $\Gamma\subset\Gamma_1$ be a $H$-group with $[\Gamma_1:\Gamma]<\infty$.
Then $\Gamma^{J}$ acts on $\HH\times\CC$ as a group of automorphisms
\[
(\gamma, (\lambda,\mu))\cdot(\tau,z)= \biggl(\gamma\tau,\frac{z+\lambda\tau+\mu}{c\tau+d}\biggr)
\]
for $(\gamma, (\lambda,\mu))\in \Gamma^J$ and $(\tau,z)\in\HH\times\CC$.
Let $k\in\frac12\ZZ$  and let $\chi$ be a multiplier system of weight $k$ on $\Gamma$.
For $\gamma=\sm a&b\\c&d\esm \in\Gamma , X = (\lambda, \mu)\in \ZZ^2$ and $m\in\ZZ$ with $m>0$, we define
\[
(F|_{k,m,\chi} \gamma)(\tau,z) := (c\tau+d)^{-k}\chi(\gamma)^{-1}e^{-2\pi im\frac{cz^2}{c\tau+d}}F(\gamma(\tau,z))
\]
and
\[
(F|_{m}X)(\tau,z) :=e^{2\pi im(\lambda^2\tau+ 2\lambda z+\mu\lambda)}F(\tau,z+\lambda\tau+\mu),
\]
where $\gamma(\tau,z) = (\gamma\tau,\frac{z}{c\tau+d})$.
Then $\Gamma^{J}$ acts on the space of  functions on $\HH\times\CC$ by
\begin{equation} \label{slashoperator}
F|_{k,m,\chi} (\gamma,X) := F|_{k,m,\chi} \gamma |_{m}X.
\end{equation}
With these notations, we introduce the definition of a Jacobi form.

\begin{dfn}
A weak holomorphic Jacobi form of weight $k$, index $m$ and multiplier system $\chi$ on $\Gamma^J$  is a holomorphic function $F$ on $\HH\times\CC$ satisfying
\begin{enumerate}
\item[(1)] $F|_{k,m,\chi} (\gamma,X) =F$ for every $(\gamma,X)\in\Gamma^J$,
\item[(2)] for each $\gamma=\sm a&b\\c&d\esm \in\Gamma_1$, the function $(c\tau+d)^{-k}e^{2\pi im\frac{-cz^2}{c\tau+d}}F((\gamma,0)\cdot(\tau,z))$ has the Fourier expansion of the form
\begin{equation} \label{Jacobifourier}
\sum_{l,r\in\ZZ\atop 4(l+\kappa_\gamma)-\lambda_\gamma mr^2 \gg -\infty}a(l,r)e^{2\pi i(l+\kappa_{\gamma})/\lambda_\gamma}e^{2\pi irz}
\end{equation}
with suitable $0\leq \kappa_\gamma<1$ and $\lambda_\gamma\in\ZZ$.
\end{enumerate}
\end{dfn}

We denote by $J^!_{k,m,\chi}(\Gamma^{J})$ the  space of all weak holomorphic Jacobi forms of weight $k$, index $m$ and multiplier system $\chi$ on $\Gamma^{J}$.
If a weak holomorphic Jacobi form satisfies the  condition $a(l,r)\neq 0$ only if $4(l+\kappa_\gamma)-\lambda_\gamma mr^2 \geq0$ (resp. $4(l+\kappa_\gamma)-\lambda_\gamma mr^2 >0$), then it is called a Jacobi form (resp. Jacobi cusp form).
We denote by $J_{k,m,\chi}(\Gamma^J)$  the  space of all Jacobi forms  of weight $k$, index $m$ and multiplier system $\chi$ on $\Gamma^{J}$.

Now we introduce a theta series.
For a positive integer $m$ and $a\in\mathcal{N}$ consider
\[
\theta_{m,a}(\tau,z) := \sum_{\lambda\in\ZZ}e^{\pi im((\lambda+a)^2\tau+2(\lambda+a)z)},
\]
which converges normally on $\HH\times\CC$.
Then this theta series gives the following theorem.

\begin{thm} \cite[Section 5]{EZ}, \cite[Section 3]{Z0} \label{decomposition} Let $F(\tau,z)$ be a function on $\HH\times\CC$ which is  holomorphic as a function of $z$ and satisfies
\begin{equation} \label{elliptic}
F|_{m} X = F\
\end{equation}
for every $X\in \ZZ^2$.
Then
\begin{equation} \label{thetaexpansion}
F(\tau,z) = \sum_{a\in\mathcal{N}}f_a(\tau)\theta_{m,a}(\tau,z)
\end{equation}
with uniquely determined  functions $f_a:\HH\to\CC$.
\end{thm}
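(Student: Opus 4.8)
The plan is to read off the decomposition directly from the two elliptic transformation laws contained in the hypothesis $F|_m X = F$ for all $X \in \ZZ^2$, using only that $z \mapsto F(\tau,z)$ is entire for each fixed $\tau$. I would first isolate the two generators of $\ZZ^2$. Taking $X = (0,1)$ in the definition of $|_m X$ gives $e^{0}F(\tau, z+1) = F(\tau,z)$, so for each fixed $\tau \in \HH$ the entire function $z \mapsto F(\tau,z)$ is $1$-periodic and hence has a Fourier expansion $F(\tau,z) = \sum_{r\in\ZZ} c_r(\tau) e^{2\pi i r z}$, convergent on all of $\CC$, with $c_r(\tau) = \int_0^1 F(\tau,z) e^{-2\pi i r z}\,dz$. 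Note that these $c_r$ are merely functions of $\tau$ with no assumed regularity; this matches the statement, which asserts only that the $f_a$ are functions $\HH \to \CC$, and is exactly why the theorem applies to the real-analytic forms appearing elsewhere in the paper.

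Next I would impose the second generator $X = (1,0)$, which yields the quasi-periodicity $F(\tau, z+\tau) = e^{-2\pi i m(\tau + 2z)} F(\tau, z)$. Substituting the $z$-expansion into both sides and comparing the coefficient of each $e^{2\pi i r z}$ produces $c_r(\tau) e^{2\pi i r\tau} = e^{-2\pi i m\tau} c_{r+2m}(\tau)$, that is, the recursion $c_{r+2m}(\tau) = e^{2\pi i(r+m)\tau} c_r(\tau)$ linking Fourier coefficients whose frequencies differ by $2m$. This splits the coefficients into exactly $2m = |\mathcal{N}|$ independent families indexed by the residue class of $r$ modulo $2m$, i.e. by $a \in \mathcal{N} = (2m)^{-1}\ZZ/\ZZ$; within each family every $c_r$ is determined by a single free function, and solving the recursion gives $c_r(\tau) = f_a(\tau)\, e^{2\pi i r^2\tau/(4m)}$ for all $r \equiv 2ma\ (\mathrm{mod}\ 2m)$, where $f_a$ is read off from any one member of the class.

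Resumming the Fourier series by grouping frequencies according to these classes, $F(\tau,z) = \sum_{a\in\mathcal{N}} f_a(\tau)\sum_{r\equiv 2ma\,(2m)} e^{2\pi i r^2\tau/(4m)} e^{2\pi i r z}$, and recognizing each inner series as the theta series $\theta_{m,a}$ attached to $a$ (after matching the normalization in its definition), I obtain the asserted expansion $F = \sum_{a\in\mathcal{N}} f_a\,\theta_{m,a}$. Uniqueness follows from the same structure: the frequency supports of distinct $\theta_{m,a}$ lie in disjoint residue classes modulo $2m$, so comparing $z$-Fourier coefficients of two representations forces the corresponding $f_a$ to coincide; indeed $f_a$ is recovered as the normalized coefficient $c_r$ for any single $r$ in the class of $a$.

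The coefficient comparison giving the recursion and the explicit solution are routine; the only points needing genuine care are justifying that the doubly-infinite Fourier series may be reindexed and regrouped into the theta series, and matching the exponents in my $c_r$ to the precise normalization of $\theta_{m,a}$ so that the $f_a$ are exactly the asserted coefficients. I would handle the first via the normal convergence of $F$'s $z$-expansion and of each $\theta_{m,a}$ on compact subsets of $\HH \times \CC$. I expect this reindexing and convergence bookkeeping to be the main obstacle, though it is not a deep one: holomorphy in $z$ alone already forces the $z$-Fourier expansion to converge everywhere, so there is no serious analytic difficulty and no hypothesis on the $\tau$-dependence of $F$ is required.
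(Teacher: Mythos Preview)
The paper does not supply its own proof of this theorem; it is quoted with attribution to \cite[Section 5]{EZ} and \cite[Section 3]{Z0}. Your argument is precisely the standard proof from Eichler--Zagier: extract the $z$-Fourier expansion from $z\mapsto z+1$, use the quasi-periodicity under $z\mapsto z+\tau$ to obtain the recursion $c_{r+2m}(\tau)=e^{2\pi i(r+m)\tau}c_r(\tau)$, observe that $c_r(\tau)e^{-2\pi i r^2\tau/(4m)}$ then depends only on $r\bmod 2m$, and regroup into the theta series. The uniqueness argument via disjoint residue classes of frequencies is also the standard one. So your proposal is correct and coincides with the proof in the cited sources; there is nothing to compare against in the present paper itself.

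One small caution on the ``matching the normalization'' step you flagged: with the paper's theta series written as $\theta_{m,a}(\tau,z)=\sum_{\lambda}e^{\pi i m((\lambda+a)^2\tau+2(\lambda+a)z)}$, the $z$-frequencies are $m(\lambda+a)$, which are half-integers when $2ma$ is odd, so a literal match with your integer-frequency expansion $\sum_r c_r e^{2\pi i r z}$ fails for those $a$. This is a normalization artifact (the Eichler--Zagier convention has $e^{2\pi i m(\cdots)}$ in the exponent, giving integer frequencies $r\equiv 2ma\pmod{2m}$), and your hedge about checking the normalization is exactly the right instinct; just be sure to reconcile this explicitly when you write it up.
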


The decomposition by theta series as in (\ref{thetaexpansion}) is called the theta expansion.
This expansion induces an isomorphism between Jacobi forms and vector-valued modular forms.
 Let $F$ be a Jacobi form in $J_{k,m,\chi}(\Gamma^{J})$. By Theorem \ref{decomposition}, we have a theta expansion
\[
F(\tau,z) = \sum_{a\in\mc{N}}f_a(\tau)\theta_{m,a}(\tau,z).
\]
Then a vector-valued function $\sum_{a\in\mc{N}} f_a \mbf{e}_a$ is a vector-valued modular form in  $M_{k-\frac 12, \chi', \rho'}(\Gamma)$ (for the definition of $\chi'$ and $\rho'$ see   \cite[Section 2]{CL0}).

\begin{thm} \cite[Section 5]{EZ} \label{isomorphism}
The theta expansion  gives an isomorphism between $J_{k,m,\chi}(\Gamma^{J})$ and $M_{k-\frac 12, \chi', \rho'}(\Gamma)$. Furthermore, this isomorphism sends Jacobi cusp forms to vector-valued cusp forms.
\end{thm}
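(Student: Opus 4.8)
The plan is to exhibit the theta expansion as a linear bijection and then match cusp conditions on the two sides. Concretely, I would study the map $\Theta : J_{k,m,\chi}(\Gamma^{J}) \to M_{k-\frac12,\chi',\rho'}(\Gamma)$ sending $F$ to the vector $\sum_{a\in\mathcal{N}} f_a\,\mbf{e}_a$ of coefficient functions in its theta expansion $F = \sum_{a} f_a\theta_{m,a}$ from Theorem \ref{decomposition}. Linearity of $\Theta$ is immediate, since the theta expansion is uniquely determined and hence depends $\CC$-linearly on $F$. Well-definedness, i.e. that $\Theta(F)$ actually lies in $M_{k-\frac12,\chi',\rho'}(\Gamma)$, is precisely the statement recorded just above the theorem (via the construction of $\chi',\rho'$ in \cite{CL0}), so I would take it as given. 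Injectivity is then free: if $\Theta(F)=0$ then every $f_a=0$, whence $F = \sum_a f_a\theta_{m,a} = 0$; this is exactly the uniqueness clause of Theorem \ref{decomposition}.

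The substance of the argument is surjectivity, for which I would construct the inverse explicitly. Given $g = \sum_{a} g_a\,\mbf{e}_a \in M_{k-\frac12,\chi',\rho'}(\Gamma)$, set $F := \sum_{a\in\mathcal{N}} g_a\theta_{m,a}$ and verify the three defining properties of a Jacobi form. Holomorphy on $\HH\times\CC$ is clear, the sum over $\mathcal{N}$ being finite and each summand a product of holomorphic functions. The elliptic invariance $F|_{m}X = F$ for $X\in\ZZ^2$ holds for \emph{any} choice of coefficient functions, because it reflects only the transformation behavior of the theta functions $\theta_{m,a}$ under the lattice $\ZZ^2$ that underlies Theorem \ref{decomposition}. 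The modular invariance $F|_{k,m,\chi}\gamma = F$ for $\gamma\in\Gamma$ is where the definitions of $\chi'$ and $\rho'$ enter: the theta vector $(\theta_{m,a})_{a}$ transforms under $\gamma=\sm a&b\\c&d\esm$ by the half-integral factor $(c\tau+d)^{1/2}$ times the Jacobi factor $e^{2\pi i m cz^2/(c\tau+d)}$ and a finite unitary matrix, and $\chi',\rho'$ are defined in \cite{CL0} precisely as the inverse of this automorphy datum; combining this with $g|_{k-\frac12,\chi',\rho'}\gamma = g$ yields $F|_{k,m,\chi}\gamma = F$. Finally, substituting the Fourier expansion of $g$ at each cusp into the Fourier series of the theta functions produces an expansion of $F$ of the form \eqref{Jacobifourier}, so $F\in J_{k,m,\chi}(\Gamma^{J})$, and uniqueness gives $\Theta(F)=g$. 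Thus $g\mapsto \sum_a g_a\theta_{m,a}$ is a two-sided inverse of $\Theta$.

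For the cusp-form statement I would track the single invariant $4(l+\kappa_\gamma)-\lambda_\gamma m r^2$ appearing in \eqref{Jacobifourier} through the theta decomposition. This discriminant is exactly the quantity that indexes the Fourier coefficients of each $f_a$, so the vanishing of $a(l,r)$ whenever $4(l+\kappa_\gamma)-\lambda_\gamma m r^2 \le 0$ is equivalent to the vanishing of the non-positive Fourier coefficients of every $f_a$ at every cusp $\gamma\in\Gamma_1$. Hence $F$ is a Jacobi cusp form if and only if $\Theta(F)$ is a vector-valued cusp form, and $\Theta$ restricts to an isomorphism $S_{k,m,\chi}(\Gamma^{J}) \cong S_{k-\frac12,\chi',\rho'}(\Gamma)$.

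The main obstacle I anticipate is the modular-invariance half of surjectivity: it rests entirely on the explicit transformation law of the theta vector under $\Gamma^{J}$ together with the matching definitions of $\chi'$ and $\rho'$, so the proof reduces to invoking the construction in \cite{CL0} and checking that the two automorphy factors are mutually inverse. A secondary technical point is purely bookkeeping, namely that the Fourier-support and cusp conditions must be checked at every cusp (for every $\gamma\in\Gamma_1$ as in Definition \ref{dfnofvvm}) rather than only at $\infty$; since both the Jacobi and the vector-valued conditions are phrased cusp-by-cusp through the same discriminant, this becomes routine once the transformation behavior is established.
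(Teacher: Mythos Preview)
The paper does not supply its own proof of this theorem: it is quoted as a known result with the citation \cite[Section 5]{EZ} and is stated without argument, so there is nothing in the paper to compare your proposal against. Your sketch is essentially the standard Eichler--Zagier argument (theta expansion as explicit inverse, modular invariance from the transformation law of the theta vector defining $\chi',\rho'$, and matching of the discriminant $4(l+\kappa_\gamma)-\lambda_\gamma m r^2$ with the Fourier-support condition on the components), and it is correct in outline; the only places requiring care are exactly the ones you flag, namely the precise bookkeeping of $\chi'$ and $\rho'$ from \cite{CL0} and the cusp-by-cusp verification.
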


\subsection{Skew-holomorphic Jacobi forms} \label{section3.2}
In this subsection, we introduce skew-holomorphic Jacobi forms (for more details see \cite{BR,Sko1,Sko2}).
First we start with the definition of skew-holomorphic Jacobi forms.
For  $k\in\frac12\ZZ$ and a positive integer $m$, we have the  slash operator on functions $F:\HH\times\CC\to\CC$ defined by
\begin{eqnarray*}
(F|_{k,m,\chi}^{\sk} (\gamma,X))(\tau,z) &:=& \chi(\gamma)^{-1}(c\bar{\tau}+d)^{1-k}|c\tau+d|^{-1}e^{2\pi im(-\frac{c(z+\lambda\tau+\mu)^2}{c\tau+d}+\lambda^2\tau+2\lambda z)} F((\gamma,X)\cdot(\tau,z))
\end{eqnarray*}
for $(\gamma,X) = (\sm a&b\\c&d\esm, (\lambda,\mu))\in\Gamma^{J}$, where $\chi$ is a multiplier system of weight $k$ on $\Gamma$.

\begin{dfn}
 A function $F:\HH\times\CC\to\CC$ is a weak skew-holomorphic Jacobi form of weight $k$, index $m$ and multiplier system $\chi$ on $\Gamma^{J}$ if $F$ is real-analytic in $\tau\in\HH$ and holomorphic in $z\in\CC$, and it satisfies the  conditions
\begin{enumerate}
\item[(1)] for all $(\gamma,X)\in\Gamma^{J}$, $F|_{k,m,\chi}^{\sk}(\gamma,X) = F$,
\item[(2)] for each $\gamma=\sm a&b\\c&d\esm \in\Gamma_1$, the function $(c\tau+d)^{-k}e^{2\pi im(-\frac{cz^2}{c\tau+d})}F((\gamma,0)\cdot(\tau,z))$ has the Fourier expansion of the form
\begin{equation} \label{Jacobifourier}
\sum_{l,r\in\ZZ\atop 4m(l+\kappa_\gamma)-\lambda_\gamma r^2\ll \infty}a(l,r)e^{\frac{-\pi(\lambda_\gamma r^2-4m(l+\kappa_{\gamma}))v}{m\lambda_\gamma}}e^{2\pi i\tau(l+\kappa_{\gamma})/\lambda_\gamma}e^{2\pi irz}
\end{equation}
with  suitable $0\leq \kappa_\gamma<1$ and  $\lambda_\gamma\in\ZZ$.
\end{enumerate}
\end{dfn}

If the Fourier expansion in (\ref{Jacobifourier}) is only over $4m(l+\kappa_\gamma)-\lambda_\gamma r^2\leq0$ (resp. $4m(l+\kappa_\gamma)-\lambda_\gamma r^2<0$), then $F$ is called a skew-holomorphic Jacobi form (resp. skew-holomorphic Jacobi cusp form). We denote the space of weak skew-holomorphic Jacobi forms (resp. skew-holomorphic Jacobi forms)  of weight $k$, index $m$ and multiplier system $\chi$ on $\Gamma^{J}$ by $J^{\sk!}_{k,m,\chi}(\Gamma^{J})$ (resp. $J^{\sk}_{k,m,\chi}(\Gamma^{J})$).

It is known that skew-holomorphic Jacobi forms are closely related with vector-valued modular forms.
As in the case of a Jacobi form, a skew-holomorphic Jacobi form $F\in J^{\sk}_{k,m,\chi}(\Gamma^{J})$ has a theta expansion
\[
F(\tau,z) = \sum_{a\in \mc{N}}f_a(\tau)\theta_{m,a}(\tau,z).
\]
But in this case, $\sum_{a\in\mc{N}}\overline{f_a}\mbf{e}_a$ is a vector-valued modular form in $M_{k-\frac12, \overline{\chi'},\overline{\rho'}}(\Gamma)$, where $\chi'$ and $\rho'$ are as in Theorem \ref{isomorphism}.
This gives an isomorphism between skew-holomorphic Jacobi forms and vector-valued modular forms.

\begin{thm} \cite[Section 6]{BR}  \label{isomorphismskew}
The theta expansion  gives an isomorphism between $J^{\sk}_{k,m,\chi}(\Gamma^{J})$ and $M_{k-\frac 12, \overline{\chi'}, \overline{\rho'}}(\Gamma)$. Furthermore, this isomorphism sends skew-holomorphic Jacobi cusp forms to vector-valued cusp forms.
\end{thm}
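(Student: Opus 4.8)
The plan is to run the argument in complete parallel with the holomorphic case, Theorem \ref{isomorphism}, the only genuinely new feature being the complex conjugation that the skew slash operator $|^{\sk}_{k,m,\chi}$ imposes on the theta coefficients. First, let $F\in J^{\sk}_{k,m,\chi}(\Gamma^J)$. Since $F$ is holomorphic in $z$ and, restricting the invariance $F|^{\sk}_{k,m,\chi}(\gamma,X)=F$ to $X\in\ZZ^2$, satisfies $F|_{m}X=F$, Theorem \ref{decomposition} provides a unique theta expansion $F(\tau,z)=\sum_{a\in\mc{N}}f_a(\tau)\theta_{m,a}(\tau,z)$. I would define the candidate isomorphism by $\Theta^{\sk}(F):=\sum_{a\in\mc{N}}\overline{f_a}\,\mbf{e}_a$. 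Injectivity is immediate from the uniqueness clause of Theorem \ref{decomposition} (the $f_a$ are determined by $F$, and $\overline{f_a}\equiv0$ iff $f_a\equiv0$); equivalently, the $\theta_{m,a}$, $a\in\mc{N}$, are linearly independent over functions of $\tau$ because distinct $a$ give disjoint supports in the $z$-expansion.

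The heart of the matter is to show that $\Theta^{\sk}(F)$ is a vector-valued modular form of weight $k-\tfrac12$, character $\overline{\chi'}$ and type $\overline{\rho'}$, and conversely that every such form arises this way. For this I would invoke the known transformation law of the vector $(\theta_{m,a})_{a\in\mc{N}}$ under $\Gamma_1^J$: for $(\gamma,X)\in\Gamma_1^J$ the functions $\theta_{m,a}$ are permuted, up to a fixed half-integral-weight automorphy factor and the multiplier $\chi'$, by the unitary matrix $\rho'(\gamma)$ (this is the input of \cite[Section 2]{CL0} already used for Theorem \ref{isomorphism}). Substituting the theta expansion of $F$ into $F|^{\sk}_{k,m,\chi}(\gamma,X)=F$ and comparing coefficients of $\theta_{m,b}$, the factor $(c\bar\tau+d)^{1-k}|c\tau+d|^{-1}$ occurring in the skew slash operator --- which is, up to the weight-$\tfrac12$ theta factor, the complex conjugate of the factor $(c\tau+d)^{-k}$ in the ordinary slash operator --- forces $\overline{f_a}$ to transform with weight $k-\tfrac12$ and the conjugated data $\overline{\chi'},\overline{\rho'}$; this is exactly the assertion recorded in the paragraph preceding the theorem. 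Running the same computation in reverse, for any $\mbf{g}=\sum_a g_a\mbf{e}_a\in M_{k-\frac12,\overline{\chi'},\overline{\rho'}}(\Gamma)$ the function $F:=\sum_a\overline{g_a}\,\theta_{m,a}$ is holomorphic in $z$, real-analytic in $\tau$ (an anti-holomorphic coefficient times a holomorphic theta series), and satisfies $F|^{\sk}_{k,m,\chi}(\gamma,X)=F$, with $\Theta^{\sk}(F)=\mbf{g}$; hence $\Theta^{\sk}$ is a bijection.

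It remains to match the growth conditions at the cusps. Fix $\gamma\in\Gamma_1$ and compare the Fourier expansion \eqref{Jacobifourier} of $F$ at the cusp $\gamma^{-1}\infty$ with the Fourier expansion of the $\theta_{m,a}$ (which have non-negative $\tau$-Fourier exponents): the sign condition $4m(l+\kappa_\gamma)-\lambda_\gamma r^2\leq0$ in \eqref{Jacobifourier}, after conjugating and pairing against the theta exponents, becomes precisely the condition that the component functions of $\Theta^{\sk}(F)$ have non-negative Fourier exponents at the cusp $\gamma^{-1}\infty$ --- i.e.\ that $\Theta^{\sk}(F)$ is holomorphic there, and the reverse implication as well --- while strict inequality ($<0$) corresponds to vanishing there. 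Since $\gamma$ was arbitrary, $\Theta^{\sk}$ restricts to a bijection from skew-holomorphic Jacobi cusp forms onto vector-valued cusp forms, which is the last assertion.

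The main obstacle I expect is the bookkeeping of half-integral-weight automorphy factors together with the conjugations: one must verify that the complex conjugate of the theta automorphy factor for $\gamma$, times the $\tau$-dependent part of the skew slash operator, equals exactly the automorphy factor of weight $k-\tfrac12$ with multiplier $\overline{\chi'}$ and type $\overline{\rho'}$, and one must check the cusp conditions at \emph{all} cusps of $\Gamma$ rather than only at $\infty$, which needs the transformation of $(\theta_{m,a})_{a}$ under all of $\Gamma_1$ (available from \cite{CL0} and \cite{EZ}). Beyond this, everything is formally identical to the proof of Theorem \ref{isomorphism}.
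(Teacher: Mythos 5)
Your proposal is correct, but note that the paper does not prove this statement at all: it is quoted directly from \cite[Section 6]{BR}, and the only in-text justification is the remark preceding the theorem that the theta coefficients of a skew-holomorphic form, after conjugation, give a vector-valued form of weight $k-\tfrac12$ with the conjugated data $\overline{\chi'},\overline{\rho'}$. Your argument --- theta decomposition via Theorem \ref{decomposition}, the weight/multiplier bookkeeping forced by the factor $(c\bar\tau+d)^{1-k}|c\tau+d|^{-1}$ against the weight-$\tfrac12$ theta automorphy, the reverse construction for surjectivity, and the matching of the sign conditions $4m(l+\kappa_\gamma)-\lambda_\gamma r^2\leq 0$ (resp. $<0$) with holomorphy (resp. vanishing) of the components at each cusp --- is exactly the standard proof underlying the cited result and the analogous Theorem \ref{isomorphism}, so it is consistent with the paper's (implicit) approach.
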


With this isomorphism, we define a supplementary function in the case of Jacobi forms as follows. For $F\in S_{k,m,\chi}(\Gamma^{J})$ we have a theta expansion
\begin{equation} \label{skewcase}
F(\tau,z) = \sum_{a\in\mc{N}} f_a(\tau)\theta_{m, a}(\tau,z).
\end{equation}
Then a vector-valued function $f:=\sum_{a\in\mc{N}}f_a\mbf{e}_a$ is a vector-valued cusp form in $S_{k-\frac12, \chi',\rho'}(\Gamma)$.
By the vector-valued supplementary function theory  there is a vector-valued weakly holomorphic modular form $f^* = \sum_{a\in\mc{N}} f^*_a\mbf{e}_a \in M^!_{k-\frac12, \overline{\chi'},\overline{\rho'}}(\Gamma)$, which is a function supplementary to $f$ (for more details see \cite[Section 2.3]{CL}). Then we have a weak skew-holomorphic Jacobi form $F^*\in J^{\sk!}_{k,m,\chi}(\Gamma^{J})$ defined by
\[
F^*(\tau,z) = \sum_{a\in\mc{N}} \overline{f^*_a(\tau)}\theta_{m,a}(\tau,z).
\]
We call $F^*$ the function supplementary to  $F$.

Now we introduce how to construct a Jacobi integral if a skew-holomorphic Jacobi cusp form is given. Let $F\in J^{\sk,\cusp}_{k+\frac52, m,\chi}(\Gamma^J)$. We have a theta expansion as in (\ref{skewcase}).
Then we have a non-holomorphic Jacobi integral associated with $F$, which is given by
\[
\hat{\mc{E}}^J(F)(\tau,z) := \sum_{a\in\mc{N}} \hat{\mc{E}}(\overline{f_a})(\tau)\theta_{m,a}(\tau,z).
\]

\subsection{Harmonic Maass-Jacobi forms} \label{section3.3}
In this subsection, we introduce the theory of harmonic Maass-Jacobi forms. For details, consult \cite{BR}.

\begin{dfn} Let $k\in\frac12\ZZ$ and $\chi$ be a multiplier system of weight $k$ on $\Gamma$.
A function $F:\HH\times\CC\to\CC$ is a harmonic Maass-Jacobi form of weight $k$, index $m$ and multiplier system $\chi$ on $\Gamma^{J}$ if $F(\tau,z)$ is real-analytic in $\tau\in\HH$ and holomorphic in $z\in\CC$, and it satisfies the  conditions
\begin{enumerate}
\item[(1)] for all $(\gamma,X)\in \Gamma^{J}$, $F|_{k,m,\chi}(\gamma,X) = F$,
\item[(2)] $C^{k,m}(F) = 0$,
\item[(3)] for each $\gamma=\sm a&b\\c&d\esm \in\Gamma_1$
\[
(c\tau+d)^{-k}e^{2\pi im(-\frac{cz^2}{c\tau+d})}F((\gamma,0)\cdot(\tau,z)) = O(e^{av}e^{2\pi my^2/v})
\]
as $v\to\infty$ for some $a>0$.
\end{enumerate}
\end{dfn}

Note that elements $F$ in $\hat{J}^{\cusp}_{k,m,\chi}(\Gamma^J)$ have the Fourier expansion of the form
\begin{eqnarray} \label{fourier}
F(\tau,z) &=& \sum_{n,r\in\ZZ\atop 4(l+\kappa)m-r^2\lambda\gg-\infty}a^+(l,r)e^{2\pi i(l+\kappa)\tau/\lambda}e^{2\pi irz}\\
\nonumber&& + \sum_{l,r\in\ZZ\atop  4(l+\kappa)m-r^2\lambda<0} a^-(l,r)\Gamma\biggl(\frac 32-k,\frac{\pi(r^2\lambda-4(l+\kappa)m)v}{m\lambda}\biggr)e^{2\pi i(l+\kappa)\tau/\lambda}e^{2\pi irz},
\end{eqnarray}
where $\Gamma(\alpha, t) := \int^\infty_t e^{-w}w^{\alpha-1}dw$ is the incomplete Gamma function.
The first (resp. second) summand is called the holomorphic (resp. non-holomorphic) part of $F$ (for more details see \cite[Section 4]{BR}).

\begin{rmk}
For the Fourier expansion of a harmonic Maass-Jacobi form $F\in \hat{J}^{\cusp}_{k,m,\chi}(\Gamma^J)$ in (\ref{fourier}) we follow the notation in \cite{BR} in the case where $\xi_{k}(F)$ is a skew-holomorphic Jacobi cusp form and $m$ is a positive integer.
A more detailed treatment of harmonic Maass-Jacobi forms was given by Bringmann, Raum and Richter \cite{BRR}.
\end{rmk}

If $F\in \hat{J}^{\cusp}_{k,m,\chi}(\Gamma^J)$, then $F$ has a theta expansion
\begin{equation} \label{thetaexpansionharmonic}
F(\tau,z) = \sum_{a\in\mc{N}}f_a(\tau)\theta_{m,a}(\tau,z).
\end{equation}
By the insightful observation by Bringmann and Richter \cite{BR}, we see that $\sum_{a\in\mc{N}} f_a\mbf{e}_a$ is a vector-valued harmonic weak Maass form in $H^*_{k-\frac12, \chi',\rho'}(\Gamma)$.
The theta expansion has further interesting properties.
The following lemma shows that two operators $\xi_{\frac52-k,m}$ and $H_m^{k-1}$ commute with the theta expansion.

\begin{lem} \label{commute}
Let $F\in \hat{J}^{\cusp}_{\frac52-k,m,\chi}(\Gamma^J)$ with a theta expansion as in (\ref{thetaexpansionharmonic}).
\begin{enumerate}
\item The function $\xi_{\frac52-k, m,\chi}(F)\in J^{\sk,\cusp}_{k+\frac12,m,\chi}(\Gamma^J)$ has a theta expansion
\[
\xi_{\frac52-k, m,\chi}(F)(\tau,z) = \sum_{a\in\mc{N}} \overline{\xi_{2-k}(f_a)(\tau)}\theta_{m,a}(\tau,z).
\]

\item The function $H_m^{k-1}(F)\in J^!_{k+\frac12, m,\chi}(\Gamma^J)$ has a theta expansion
\[
H_m^{k-1}(F)(\tau,z) = (-16\pi^2m)^{k-1}\sum_{a\in\mc{N}}D^{k-1}(f_a)(\tau)\theta_{m,a}(\tau,z).
\]
\end{enumerate}
\end{lem}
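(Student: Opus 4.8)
The plan is to verify both formulas componentwise in the (finite) theta expansion $F=\sum_{a\in\mc N}f_a\theta_{m,a}$ provided by (\ref{thetaexpansionharmonic}). Because $\mc N$ is finite there is no convergence or interchange‑of‑limits issue, so it suffices to compute $\xi_{\frac52-k,m}(f_a\theta_{m,a})$ and $H_m^{k-1}(f_a\theta_{m,a})$ for a single $a$, using only three facts: each $\theta_{m,a}$ is holomorphic in $\tau$ and in $z$; each $f_a$ is independent of $z$; and $H_m$ annihilates every $\theta_{m,a}$ (the heat equation satisfied by the Jacobi theta series).

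For (1), write $\xi_{\frac52-k,m}=\bigl(\tfrac{\tau-\bar\tau}{2i}\bigr)^{-k}D_-^{(m)}$ and note that $D_-^{(m)}$ is assembled from $\partial_{\bar\tau},\partial_{\bar z},\partial_{\bar z\bar z}$ together with the smooth coefficients $\tfrac{\tau-\bar\tau}{2i}=v$ and $z-\bar z$. Holomorphy of $f_a\theta_{m,a}$ in $z$ makes all of its $\bar z$-derivatives vanish, while holomorphy of $\theta_{m,a}$ in $\tau$ gives $\partial_{\bar\tau}(f_a\theta_{m,a})=(\partial_{\bar\tau}f_a)\theta_{m,a}$; substituting these into the definition of $D_-^{(m)}$ leaves only the single term $D_-^{(m)}(f_a\theta_{m,a})=-2iv^{2}(\partial_{\bar\tau}f_a)\theta_{m,a}$, hence $\xi_{\frac52-k,m}(f_a\theta_{m,a})=-2iv^{2-k}(\partial_{\bar\tau}f_a)\theta_{m,a}$. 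Since $-2iv^{2-k}\partial_{\bar\tau}f_a=\overline{2iv^{2-k}\overline{\partial_{\bar\tau}f_a}}=\overline{\xi_{2-k}(f_a)}$, summing over $a\in\mc N$ gives the asserted expansion; the membership $\xi_{\frac52-k,m}(F)\in J^{\sk,\cusp}_{k+\frac12,m,\chi}(\Gamma^J)$ is just the defining property of $\hat J^{\cusp}_{\frac52-k,m,\chi}(\Gamma^J)$.

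For (2), the Leibniz rule, the identity $\partial_{zz}(f_a\theta_{m,a})=f_a\,\partial_{zz}\theta_{m,a}$, and $H_m\theta_{m,a}=0$ give $H_m(g\,\theta_{m,a})=8\pi im\,(\partial_\tau g)\,\theta_{m,a}$ for every function $g=g(\tau)$; iterating $k-1$ times and using $\partial_\tau=2\pi i\,D$ yields
\[
H_m^{k-1}(f_a\theta_{m,a})=(8\pi im)^{k-1}(\partial_\tau^{k-1}f_a)\theta_{m,a}=(-16\pi^2m)^{k-1}(D^{k-1}f_a)\theta_{m,a},
\]
and summing over $a$ produces the stated theta expansion. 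The only step that is not a mechanical computation is checking that $H_m^{k-1}(F)$ really lies in $J^!_{k+\frac12,m,\chi}(\Gamma^J)$: here one uses that the vector $\sum_a f_a\mbf e_a$ belongs to $H^*_{2-k,\chi',\rho'}(\Gamma)$, so by \cite{BOR} (the operator $D^{\ell+1}$ with $\ell=k-2$) the vector $\sum_a D^{k-1}(f_a)\mbf e_a$ lies in $M^!_{k,\chi',\rho'}(\Gamma)$ — in particular each $D^{k-1}(f_a)$ is holomorphic — and the theta isomorphism of Theorem \ref{isomorphism}, in its weakly holomorphic form, then identifies $\sum_a D^{k-1}(f_a)\theta_{m,a}$ with a weak Jacobi form of weight $k+\tfrac12$, index $m$ and multiplier $\chi$.

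I expect the heat‑operator bookkeeping and the constant $(-16\pi^2m)^{k-1}$ to be entirely routine; the substantive input, and the only real obstacle, is the appeal to \cite{BOR} in (2), which simultaneously supplies the holomorphy of the right‑hand side and the $J^!$-membership of $H_m^{k-1}(F)$ — there is no purely Jacobi‑theoretic shortcut around the fact that $D^{k-1}$ annihilates the non‑holomorphic part of a harmonic weak Maass form with cuspidal shadow.
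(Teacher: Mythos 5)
Your proposal is correct and follows essentially the same route as the paper: both argue componentwise through the finite theta expansion, using holomorphy of $\theta_{m,a}$ in $\tau$ and $z$ for part (1) and the identity $H_m(\theta_{m,a})=0$ iterated $k-1$ times for part (2), with the $J^!$-membership supplied by the $D^{k-1}$ result of \cite{BOR} applied to the associated vector-valued harmonic weak Maass form. The only cosmetic difference is that for (1) the paper simply invokes the argument of \cite{BR}, whereas you write out the computation with $D_-^{(m)}$ explicitly, which is a faithful expansion of the same step rather than a different method.
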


\begin{proof} [\bf Proof of Lemma \ref{commute}]
Using the argument in \cite{BR} for the case of $\Gamma=\Gamma_1$, we can prove (1) for general $\Gamma$.
If we use the fact that $H_{m}(\theta_{m,a}) = 0$, then
\[
H_m\biggl(\sum_{a\in\mc{N}}f_a(\tau)\theta_{m,a}(\tau,z)\biggr) = (-16\pi^2 m)\sum_{a\in\mc{N}}D(f_a)(\tau)\theta_{m,a}(\tau,z).
\]
From this we can prove (2).
\end{proof}

From this, one can see that the heat operator $H^{k-1}_m$ gives a linear map between harmonic Maass-Jacobi forms and weak holomorphic Jacobi forms and this map depends only on the holomorphic parts of harmonic Maass-Jacobi forms.

\begin{lem} \label{main1}
The heat operator $H^{k-1}_m$ gives a linear map
\[
H_m^{k-1} : \hat{J}^{\cusp}_{\frac52-k,m,\chi}(\Gamma^J) \to J^{!}_{k+\frac12,m,\chi}(\Gamma^J).
\]
Moreover, if $F \in \hat{J}^{\cusp}_{\frac52-k,m,\chi}(\Gamma^J)$, then
\[
H_m^{k-1}(F) = H_m^{k-1}(F^+).
\]
\end{lem}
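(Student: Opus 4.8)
The plan is to reduce the statement to the analogous fact for vector-valued harmonic weak Maass forms by passing through the theta expansion. First I would take $F \in \hat{J}^{\cusp}_{\frac52-k,m,\chi}(\Gamma^J)$ and write its theta expansion $F(\tau,z) = \sum_{a\in\mc{N}} f_a(\tau)\theta_{m,a}(\tau,z)$ as in (\ref{thetaexpansionharmonic}), so that the vector $f = \sum_{a\in\mc{N}} f_a\mbf{e}_a$ lies in $H^*_{2-k,\chi',\rho'}(\Gamma)$ by the observation of Bringmann and Richter recalled before Lemma \ref{commute}. By Lemma \ref{commute}(2), $H_m^{k-1}(F)$ has the theta expansion $(-16\pi^2 m)^{k-1}\sum_{a\in\mc{N}} D^{k-1}(f_a)(\tau)\theta_{m,a}(\tau,z)$; since $D^{k-1}$ applied to a vector-valued harmonic weak Maass form of weight $2-k$ lands in $M^!_{k,\chi',\rho'}(\Gamma)$ (this is the operator $D^{k+1}$ in Section \ref{section2.1}, shifted to the present weight normalization), the transform of the theta expansion back to the Jacobi side shows $H_m^{k-1}(F) \in J^!_{k+\frac12,m,\chi}(\Gamma^J)$, using that the theta expansion intertwines the slash operators. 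Linearity is immediate from linearity of $H_m$ and of the theta expansion.

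For the second assertion, the key point is that the non-holomorphic part is killed by $H_m^{k-1}$. I would again argue on the vector-valued side: writing $f = f^+ + f^-$ for the canonical decomposition of the harmonic weak Maass form, one has $f^- \in \ker D^{k-1}$ because $f^-$ is (up to the period integral normalization) the non-holomorphic Eichler integral $\hat{\mc{E}}$ of a cusp form of weight $k$, and $D^{k-1}$ annihilates such non-holomorphic Eichler integrals — concretely, in the Fourier expansion (\ref{fourier}) each non-holomorphic term $a^-(l,r)\Gamma(\frac32 - k, \cdots)e^{2\pi i(l+\kappa)\tau/\lambda}e^{2\pi irz}$ involves an incomplete Gamma factor of the form $\Gamma(\frac32 - k, c v)$ with the \emph{same} exponential $e^{2\pi i(l+\kappa)\tau/\lambda}$, and the heat operator $H_m = 8\pi i m\partial_\tau - \partial_{zz}$ acting on such a term produces, after $k-1$ iterations, a factor that vanishes identically because the operator $D$ raises the order of the incomplete Gamma function and after $k-1$ steps annihilates a term of that shape (this is exactly the reason $D^{k+1}$ kills the non-holomorphic part of a weight $-k$ harmonic weak Maass form, cited from \cite{BOR}). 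Since $F^+$ has exactly the holomorphic part of the Fourier expansion and $F = F^+ + F^-$ with $H_m^{k-1}(F^-) = 0$, linearity then gives $H_m^{k-1}(F) = H_m^{k-1}(F^+)$.

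The main obstacle I anticipate is making precise the claim that the heat operator annihilates the non-holomorphic part after $k-1$ iterations: one must verify that each summand of the non-holomorphic part in (\ref{fourier}) really does lie in $\ker H_m^{k-1}$, i.e., that $\partial_\tau$ and $\partial_{zz}$ interact with the incomplete Gamma factors $\Gamma(\frac32 - k, \frac{\pi(r^2\lambda - 4(l+\kappa)m)v}{m\lambda})$ in the way the corresponding scalar statement predicts. This is cleanest to do by transporting everything through the theta expansion: Lemma \ref{commute}(2) already gives $H_m^{k-1}(F) = (-16\pi^2 m)^{k-1}\sum_{a} D^{k-1}(f_a)\theta_{m,a}$, and $D^{k-1}(f_a^-) = 0$ is precisely the statement that $D^{k+1}$ (in the weight $-k$ normalization of Section \ref{section2.3}) kills the non-holomorphic part of a harmonic weak Maass form, which is Theorem 1.2-type input already cited in the excerpt. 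So the work is really bookkeeping: matching the weight shift $2-k \leftrightarrow -k$ between the Jacobi normalization and the vector-valued normalization, and checking that $F^+$ corresponds under the theta expansion to $f^+ = \sum_a f_a^+ \mbf{e}_a$. Once those identifications are in place, the proof is two lines.
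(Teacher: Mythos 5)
Your proposal is correct and follows essentially the same route as the paper: pass to the vector-valued side via the theta expansion, apply Lemma \ref{commute}(2) to write $H_m^{k-1}(F)=(-16\pi^2m)^{k-1}\sum_a D^{k-1}(f_a)\theta_{m,a}$, use the Bol/BOR result that $D^{k-1}$ maps $H^*_{2-k,\chi',\rho'}(\Gamma)$ into $M^!_{k,\chi',\rho'}(\Gamma)$ and kills the non-holomorphic part, and note that $F^+$ corresponds to $\sum_a f_a^+\theta_{m,a}$. Your extra remarks about the incomplete Gamma factors are just the explicit form of the cited input and do not change the argument.
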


This is an analogous result of Theorem 1.1 in \cite{BOR}. We prove Lemma \ref{main1} by using the Bol's identity in \cite{Bol} and \cite{CK} and the theta expansion.

\begin{rmk}
Authors of  \cite{BRR}   proved that the heat operator gives  analogue of $\xi_k$ on the space of skew-holomorphic Jacobi forms and calculated the action of the heat operator on Fourier expansions of skew-holomorphic Jacobi forms.
\end{rmk}

\begin{proof} [\bf Proof of Lemma \ref{main1}]
Suppose that $F \in \hat{J}^{\cusp}_{\frac52-k,m,\chi}(\Gamma^J)$. We have a theta expansion
\[
F(\tau,z) = \sum_{a\in\mc{N}}f_a(\tau)\theta_{m,a}(\tau,z).
\]
Then by Lemma \ref{commute} we have
\begin{equation} \label{formulaforheat}
H^{k-1}_m(F)(\tau,z) = (-16\pi^2m)^{k-1}\sum_{a\in\mc{N}}D^{k-1}(f_a)(\tau)\theta_{m,a}(\tau,z).
\end{equation}
Since the vector-valued function  $\sum_{a\in\mc{N}} f_a\mbf{e}_a$ is in $H^*_{2-k, \chi',\rho'}(\Gamma)$,  the vector-valued function
\[
\sum_{a\in\mc{N}} D^{k-1}(f_a)\mbf{e}_a
\]
is a weakly holomorphic modular form in $M^!_{k,\chi',\rho'}(\Gamma)$.
Therefore, $H^{k-1}_m(F)$ is a weak holomorphic Jacobi form in $J^{!}_{k+\frac12,m,\chi}(\Gamma^J)$. Moreover, one can see that
\[
F^+(\tau,z) = \sum_{a\in\mc{N}}f_a^+(\tau)\theta_{m,a}(\tau,z).
\]
By (\ref{formulaforheat}) and Theorem 1.1 in \cite{BOR} we see that $H^{k-1}_m(F)$ depends only on
the holomorphic part $F^+$.
\end{proof}




\section{Proof of the main theorems} \label{section4}
\begin{proof} [\bf Proof of Theorem \ref{main2}]
The proof is based on the theta expansion in Theorem \ref{decomposition} and the argument in \cite{CKL}.
It is clear that $C_{m,\chi}$ is contained in $E_{\frac52-k, m,\chi}(\Gamma^J)$.

If $F\in E_{\frac52-k, m,\chi}(\Gamma^J)$, then it has a theta expansion
\begin{equation} \label{EichlerJacobiintegral}
F(\tau,z) = \sum_{a\in\mc{N}} f_a(\tau)\theta_{m,a}(\tau,z).
\end{equation}
Then one can check that a vector-valued function $f := \sum_{a\in\mc{N}} f_a\mbf{e}_a$ is a vector-valued Eichler integral in $E_{2-k,\chi',\rho'}(\Gamma)$. This comes from Lemma 3.6 in \cite{CL1} and the property of the theta expansion:
for $(\gamma,X)\in\Gamma^{J}$,
\begin{equation} \label{propertyofthetaexpansion}
\biggl(\biggl(\sum_{a\in\mc{N}}f_a\theta_{m,a}\biggr)\biggr|_{\frac 52-k, m,\chi}(\gamma,X)\biggr)(\tau,z) = \sum_{a\in\mc{N}}\widetilde{f_a}(\tau)\theta_{m,a}(\tau,z),
\end{equation}
where $\sum_{a\in\mc{N}}\widetilde{f_a}\mbf{e}_a = \biggl(\sum_{a\in\mc{N}}f_a\mbf{e}_a\biggr)\biggr|_{2-k,\chi',\rho'}\gamma$.
Therefore, we have
\begin{equation} \label{eq1}
D^{k-1}(f) = g^* + D^{k-1}(h),
\end{equation}
where $g^*$ is a supplementary function to a cusp form $g\in S_{k, \overline{\chi'},\overline{\rho'}}(\Gamma)$ and $h\in M^!_{2-k,\chi',\rho'}(\Gamma)$.
Define $b^+ = \mathcal{E}(g^*)$ and $b^- = -\hat{\mathcal{E}}(g)$.
Then by Theorem 2.8 and 2.9 in \cite{CL} one can see that $b:= b^+ + b^-$ is invariant under the slash operator $|_{2-k,\chi',\rho'}\gamma$ for alll $\gamma\in\Gamma$.
We can also check that $\Delta_{2-k}(b) = 0$ and hence $h\in H^*_{2-k,\chi',\rho'}(\Gamma)$.
Note that
\[
D^{k-1}(b) = D^{k-1}(b^+) = \frac{(-1)^{k-1}(k-2)!}{(2\pi i)^{k-1}}g^*.
\]
From this the function $\frac{(-1)^{k-1}(2\pi i)^{k-1}}{(k-2)!}b+h$ is an element of $H^*_{2-k,\chi',\rho'}(\Gamma)$ and its image under $D^{k-1}$ is the same as $D^{k-1}(f)$ by (\ref{eq1}).
Therefore, we have
\[
f= \left(\frac{(-1)^{k-1}(2\pi i)^{k-1}}{(k-2)!}b+h\right)^+ + c
\]
for some vector-valued constant $c$.

As we noted in Section \ref{section3.3}, the theta expansion induces an isomorphism between $\hat{J}^{\cusp}_{\frac52-k, m,\chi}(\Gamma^J)$ and $H^*_{2-k,\chi',\rho'}(\Gamma)$. This isomorphism sends holomorphic parts (resp. non-holomorphic parts) of vector-valued harmonic Maass-Jacobi forms to holomorphic parts (resp. non-holomorphic parts) of harmonic weak Maass forms.
Therefore, we have an isomorphism between $\hat{J}^+_{\frac52-k,m,\chi}(\Gamma^J)$ and $H^+_{2-k,\chi',\rho'}(\Gamma)$.
On the other hand, for a generator $Q$ of $\Gamma_\infty$ we have
\[
\chi'(Q)\rho'(Q) = \chi(Q) A_{Q,\chi} = B_{Q,\chi},
\]
where $A_{Q,\chi}$ and $B_{Q,\chi}$ are diagonal $|\mc{N}|$ by $|\mc{N}|$ matrices given by
$(A_{Q,\chi})_{a,a} = e^{-2\pi i\lambda ma^2}$ and $(B_{Q,\chi})_{a,a} = e^{2\pi i\kappa_{a}}$
for $a\in\mc{N}$. Here $\kappa_a$  is a real number with $0\leq \kappa_{a}<1$. Then one can see that the theta expansion gives an isomorphism between $C_{m,\chi}$ and $\sum_{a\in\mc{N}}\delta_{\kappa_{a},0}\CC$.
The first part of Theorem \ref{main2} (1) follows from two isomorphisms: $\hat{J}^+_{\frac52-k,m,\chi}(\Gamma^J) \cong H^+_{2-k,\chi',\rho'}(\Gamma)$ and $C_{m,\chi} \cong \sum_{a\in\mc{N}}\delta_{\kappa_{a},0}\CC$.

If $F\in \hat{J}^+_{\frac52-k,m,\chi}(\Gamma^J)$, then the corresponding vector-valued function $f^+ = \sum_{a\in\mathcal{N}} f^+_a \mbf{e}_a$
given by the theta expansion is in $H^+_{2-k,\chi',\rho'}(\Gamma)$.
For $f^+\in H^+_{2-k,\chi',\rho'}(\Gamma)$ there is a corresponding $f\in H^*_{2-k,\chi',\rho'}(\Gamma)$.
Let $g = \xi_{2-k}(f)$ and let $g^*$ be its supplementary function.
We already checked that $b:= \mathcal{E}(g^*) -\hat{\mathcal{E}}(g)$ is a harmonic weak Maass form in $H^*_{2-k,\chi',\rho'}(\Gamma)$.
Then by a direct computation we have
\[\xi_{2-k}\left(f+ \frac{(k-2)!}{(4\pi)^{k-1}}b\right) =0\]
and hence
\[h:= f+ \frac{(k-2)!}{(4\pi)^{k-1}}b\in M^!_{2-k,\chi',\rho'}(\Gamma).\]
From this we obtain
\begin{eqnarray*}
H_m^{k-1}(F) (\tau,z)&=&  (-16\pi^2m)^{k-1}\sum_{a\in\mathcal{N}}D^{k-1}(f_a)(\tau)\theta_{m,a}(\tau,z) \\
&=&  (-16\pi^2m)^{k-1} \sum_{a\in\mathcal{N}} D^{k-1}\left(-\frac{(k-2)!}{(4\pi)^{k-1}}b_a+h_a\right)(\tau)\theta_{m,a}(\tau,z)\\
&=&  (-16\pi^2m)^{k-1}\sum_{a\in\mathcal{N}} \left(\frac{(-1)^{k-2}((k-2)!)^2}{(8\pi^2 i)^{k-1}} g^*_a + D^{k-1}(h_a)\right)(\tau)\theta_{m,a}(\tau,z)\\
&=&    G^*     (\tau,z)                  +  H_m^{k-1}(H)(\tau,z),
\end{eqnarray*}
where $G^*(\tau,z) = -(-2mi)^{k-1}((k-2)!)^2\sum_{a\in\mathcal{N}} g^*_a(\tau)\theta_{m,a}(\tau,z)$ and $H(\tau,z) =  \sum_{a\in\mathcal{N}}  h_a(\tau)\theta_{m,a}(\tau,z)$. Here $G^*$ is a supplementary function to a cusp form
\[
G :=  -(-2mi)^{k-1}((k-2)!)^2\sum_{a\in\mathcal{N}} g_a(\tau)\theta_{m,a}(\tau,z).
\]
This completes the proof of Theorem \ref{main2} (1).



To prove Theorem \ref{main2} (2), let $F \in \hat{J}^{\cusp}_{\frac52-k, m,\chi}(\Gamma^J)$. Then by the part (1) we have
\begin{equation} \label{holomorphicpartexpression}
H^{k-1}_m (F^+) = G^*(\tau,z) + H^{k-1}_m (H) + \sum_{a\in\mc{N}_\chi} c(a)\theta_{m,a},
\end{equation}
where $G^*$ is a supplementary function to a skew-holomorphic Jacobi cusp form $G\in J^{\sk,\cusp}_{k+\frac12, m,\chi}(\Gamma^J)$, $H\in J^!_{\frac52-k, m,\chi}(\Gamma^J)$ and $c(a)\in \CC$. Each function in (\ref{holomorphicpartexpression}) has a theta expansion
\[
F(\tau,z) = \sum_{a\in\mc{N}} f_a(\tau)\theta_{m,a}(\tau,z),
\]
\[
G^*(\tau,z) = \sum_{a\in\mc{N}} g^*_a(\tau)\theta_{m,a}(\tau,z),
\]
and
\[
G(\tau,z) = \sum_{a\in\mc{N}} g_a(\tau)\theta_{m,a}(\tau,z).
\]
Then we see that
\[
\sum_{a\in\mc{N}}f_a\mbf{e}_a\in H^*_{2-k,\chi',\rho'}(\Gamma),
\]
\[
\sum_{a\in\mc{N}}g^*_a\mbf{e}_a\in M^!_{k,\chi', \rho'}(\Gamma),
\]
and
\[
\sum_{a\in\mc{N}}\overline{g_a}\mbf{e}_a\in S_{k,\overline{\chi'},\overline{\rho'}}(\Gamma).
\]
By the same argument in the proof of Theorem 1.1 in \cite{CKL} we obtain
\[\sum_{a\in\mc{N}}f^-_a\mbf{e}_a = \frac{(4\pi)^{k-1}}{c_k\Gamma(k-1)}\sum_{a\in\mc{N}} \hat{\mc{E}}(\overline{g_a})\mbf{e}_a,\]
where  $c_k := -\frac{\Gamma(k-1)}{(2\pi i)^{k-1}}$.
Therefore, we have
\begin{eqnarray*}
F^-(\tau,z) &=& \sum_{a\in\mc{N}} f^-_a(\tau)\theta_{m,a}(\tau,z)= \frac{(4\pi)^{k-1}}{c_k\Gamma(k-1)} \sum_{a\in\mc{N}} \hat{\mc{E}}(\overline{g_a})(\tau) \theta_{m,a}(\tau,z)\\
&=& \hat{\mc{E}}^J\biggl(\frac{(4\pi)^{k-1}}{\overline{c_k}\Gamma(k-1)} \sum_{a\in\mc{N}}g_a\theta_{m,a}\biggr)(\tau,z).
\end{eqnarray*}
This is the desired result.
\end{proof}

To prove Theorem \ref{Haberland} we need the lemma for period functions of a skew-holomorphic Jacobi cusp form.
For a skew-holomorphic Jacobi cusp form $F\in J^{\sk,\cusp}_{k+\frac12,m,\chi}(\Gamma^J)$, we have a theta expansion
\begin{eqnarray*}
F(\tau,z) &:=& \sum_{a\in\mathcal{N}} f_a(\tau)\theta_{m,a}(\tau,z).
\end{eqnarray*}
We define a  holomorphic vector-valued Jacobi integral associated with $F$ as a function $\widetilde{\mathcal{E}^J}(F): \Gamma\setminus\Gamma_1\to A_{\frac52-k, m,\chi}(\Gamma^J)$ given by
\[
\widetilde{\mathcal{E}^J}(F)(A)(\tau,z) := \sum_{a\in\mathcal{N}} \mathcal{E}(\overline{(f_A)_a})(\tau)\theta_{m,a}(\tau,z),
\]
where $\sum_{a\in\mathcal{N}} \overline{(f_A)_a}\mbf{e}_a = \left(\sum_{a\in\mathcal{N}} \overline{f_a}\mbf{e}_a\right)\bigg|_{k+2,\overline{\chi'},\overline{\rho'}}A$.
Then a period function of $\widetilde{\mathcal{E}^J}(F)$ is defined by
\[
P(\widetilde{\mathcal{E}^J}(F)) := \widetilde{\mathcal{E}^J}(F) - \widetilde{\mathcal{E}^J}(F)\|_{\frac52-k,m,\chi}(S,0).
\]
Similarly, we can define a non-holomorphic Jacobi integral and its period function associated with $F$ by
\[
\widetilde{\hat{\mathcal{E}}^J}(F)(A)(\tau,z) := \sum_{a\in\mathcal{N}} \hat{\mathcal{E}}(\overline{(f_A)_a})(\tau)\theta_{m,a}(\tau,z)
\]
and
\[
P(\widetilde{\hat{\mathcal{E}}^J}(F)) := \widetilde{\hat{\mathcal{E}}^J}(F) - \widetilde{\hat{\mathcal{E}}^J}(F)\|_{\frac52-k,m,\chi}(S,0).
\]

\begin{lem} \label{periodrelation} For every $A\in\Gamma\setminus\Gamma_1$ we have
\[
P(\widetilde{\mathcal{E}^J}(F))(A)^c = P(\widetilde{\hat{\mathcal{E}}^J}(F))(A).
\]
\end{lem}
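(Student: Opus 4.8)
\textbf{Proof proposal for Lemma \ref{periodrelation}.}

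The plan is to reduce everything to a statement about vector-valued Eichler integrals, using the fact that the theta expansion is compatible with all the relevant operations. First I would observe that both sides of the asserted identity are functions on $\HH\times\CC$ with a theta expansion, so by Theorem \ref{decomposition} it suffices to compare the coefficient functions $f_a$ in those expansions. Concretely, writing $F(\tau,z) = \sum_{a\in\mc{N}} f_a(\tau)\theta_{m,a}(\tau,z)$, the vector $\sum_{a\in\mc{N}}\overline{f_a}\mbf{e}_a$ lies in $S_{k,\overline{\chi'},\overline{\rho'}}(\Gamma)$ by Theorem \ref{isomorphismskew}, and by the property of the theta expansion in display \eqref{propertyofthetaexpansion} — the slash operator $|_{\frac52-k,m,\chi}(S,0)$ on $\HH\times\CC$ corresponds, coefficientwise, to the vector-valued slash $|_{2-k,\chi',\rho'}S$ — the period function $P(\widetilde{\mc{E}^J}(F))(A)$ has theta coefficients given by the vector-valued period function of $\mc{E}(\overline{(f_A)_a})$, and likewise for $\widetilde{\hat{\mc{E}}^J}(F)$. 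Thus the lemma follows once one proves the scalar (vector-valued) identity relating the period function of the holomorphic Eichler integral $\mathcal{E}(g)$ and that of the non-holomorphic Eichler integral $\hat{\mathcal{E}}(g)$ of a cusp form $g$, under the conjugation $\tau\mapsto\overline{\tau}$ built into the operation $(\cdot)^c$.

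The key computational step is the classical identity for periods of cusp forms: for $g\in S_k$ and $\gamma\in\Gamma_1$, one has
\[
\mathcal{E}(g)(\tau) - (\mathcal{E}(g)|_{2-k}\gamma)(\tau) = \int_{\gamma^{-1}\infty}^{\infty} g(t)(t-\tau)^{k-2}\,dt,
\]
a polynomial of degree $\le k-2$ in $\tau$, while the corresponding computation for $\hat{\mathcal{E}}(g)$, using $\hat{\mc{E}}(g)(\tau) = \overline{\int_\tau^{i\infty} g(t)(t-\overline{\tau})^k\,dt}$, produces the complex conjugate of an integral against $(t-\overline{\tau})^k$. I would then expand $(t-\overline{\tau})^k$ via the binomial theorem and compare, against the expansion of $(t-\tau)^{k-2}$, noting that the two integrands differ by an exact form: $\frac{d}{dt}\big[g(t)\,\text{(something)}\big]$ contributes the lower-order terms, so that the period integrals of the two kernels agree up to the substitution $\tau\mapsto\overline{\tau}$ and complex conjugation of the polynomial coefficients. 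This is exactly the content of the operation $G\mapsto G^c$ defined before \eqref{pairing1}: replacing each polynomial coefficient $g_a(\tau)=\sum A_{a,n}\tau^n$ by $\sum\overline{A_{a,n}}\tau^n$. Matching these term by term gives $P(\widetilde{\mathcal{E}^J}(F))(A)^c = P(\widetilde{\hat{\mathcal{E}}^J}(F))(A)$.

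I expect the main obstacle to be bookkeeping rather than conceptual: one has to keep straight which objects are conjugated (the cusp form $g$ enters through $\overline{f_a}$, so $\mathcal{E}$ is applied to $\overline{(f_A)_a}$, and then $(\cdot)^c$ conjugates the polynomial coefficients while leaving $\theta_{m,a}$ alone), and one must check that the degree-$(k-2)$ truncation implicit in $P_{k-2,m}$ is respected on both sides — i.e. that the extra terms coming from the binomial expansion of $(t-\overline{\tau})^k$ beyond degree $k-2$ genuinely cancel, which is where the cusp condition on $g$ (convergence of the period integrals) is used. Once the vector-valued version of this identity is in hand, the passage back to Jacobi forms is immediate from the compatibility of the theta expansion with $\|_{\frac52-k,m,\chi}(S,0)$, which was already recorded in \eqref{propertyofthetaexpansion}, so no further work with the Jacobi group is needed.
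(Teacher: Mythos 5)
Your overall structure (reduce to theta coefficients via Theorem \ref{decomposition} and \eqref{propertyofthetaexpansion}, then compare the period integrals of $\mathcal{E}$ and $\hat{\mathcal{E}}$ componentwise) is the same as the paper's, which simply records the direct computation
\[
P(\widetilde{\mathcal{E}^J}(F))(A)=\sum_{a\in\mc{N}}\Bigl(\int_0^{i\infty}\overline{(f_A)_a}(t)(t-\tau)^{k-2}\,dt\Bigr)\theta_{m,a}(\tau,z),\qquad
P(\widetilde{\hat{\mathcal{E}}^J}(F))(A)=\sum_{a\in\mc{N}}\overline{\Bigl(\int_0^{i\infty}\overline{(f_A)_a}(t)(t-\overline{\tau})^{k-2}\,dt\Bigr)}\,\theta_{m,a}(\tau,z),
\]
after which the identity is literally the definition of $G\mapsto G^c$ (conjugate the polynomial coefficients, leave $\theta_{m,a}$ alone). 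However, your ``key computational step'' has a genuine flaw: you take the kernel $(t-\overline{\tau})^{k}$ from the display in Section \ref{section2.3} at face value and claim that, after a binomial expansion, the terms of degree $k-1$ and $k$ ``genuinely cancel'' because the integrands ``differ by an exact form.'' No such cancellation exists. The period of the degree-$k$ kernel is a polynomial of degree $k$ whose two top coefficients are, up to binomial constants, $\int_0^{i\infty}g(t)\,dt$ and $\int_0^{i\infty}g(t)\,t\,dt$, i.e.\ nonzero critical $L$-values of the weight-$k$ cusp form $g=\overline{(f_A)_a}$ in general; the cusp condition only gives convergence of these integrals, not their vanishing, and integration by parts against $g$ introduces an antiderivative of $g$ rather than lowering the degree of the kernel. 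Moreover, even the coefficients of degree $\le k-2$ would carry the wrong binomial factors, so the two expansions cannot be matched term by term.

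The correct resolution is not a cancellation argument but the observation that the exponent $k$ in the definition of $\hat{\mc{E}}$ in Section \ref{section2.3} is a slip of convention: for a weight-$k$ cusp form the non-holomorphic Eichler integral relevant here is $\hat{\mc{E}}(g)(\tau)=\overline{\int_{\tau}^{i\infty}g(t)(t-\overline{\tau})^{k-2}\,dt}$, which is the kernel actually used throughout the rest of the paper (it is what makes $\hat{\mc{E}}^J(G)$ the non-holomorphic part in Theorem \ref{main2}, and what makes the period functions land in $P_{k-2,m}$ at all). With that kernel both period functions are the same integral up to the conjugation built into $(\cdot)^c$, and the lemma is immediate with no expansion needed; with a genuine degree-$k$ kernel the statement would in fact be false, since $P(\widetilde{\hat{\mathcal{E}}^J}(F))(A)$ would not even lie in $P_{k-2,m}$. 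So you should replace the binomial/exact-form step by this weight bookkeeping; the reduction to theta coefficients and the use of \eqref{propertyofthetaexpansion} in your first paragraph can stand as written.
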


\begin{proof} [\bf Proof of Lemma \ref{periodrelation}]
By a direct computation we obtain
\[
P(\widetilde{\mathcal{E}^J}(F))(A) = \sum_{a\in\mathcal{N}} \biggl( \int^{i\infty}_0 \overline{(f_A)_a}(t)(t-\tau)^{k-2}dt\biggr) \theta_{m,a}(\tau,z)
\]
and
\[
P(\widetilde{\hat{\mathcal{E}}^J}(F))(A)= \sum_{a\in\mathcal{N}} \biggl( \overline{\int^{i\infty}_0 \overline{(f_A)_a}(t)(t-\overline{\tau})^{k-2}dt}\biggr) \theta_{m,a}(\tau,z).
\]
This completes the proof.
\end{proof}

\begin{proof} [\bf Proof of Theorem \ref{Haberland}]
The proof is based on the arguments in \cite{EZ} and \cite{PP}.
Since $F'$ and $G'$ are skew-holomorphic Jacobi cusp forms, they have theta expansions
\begin{eqnarray*}
F'(\tau,z) &=& \sum_{a\in\mathcal{N}} f'_a(\tau)\theta_{m,a}(\tau,z),\\
G'(\tau,z) &=& \sum_{a\in\mathcal{N}} g'_a(\tau)\theta_{m,a}(\tau,z).
\end{eqnarray*}
By the proof of Theorem 5.3 in \cite{EZ}  we have
\[
\int_{\CC/\ZZ\tau+\ZZ} \theta_{m,a}(\tau,z)\overline{\theta_{m,b}(\tau,z)}e^{-4\pi my^2/v} dxdy = \sqrt{v/4m}\delta_{ab},
\]
where $\delta_{ab}$ is the Kronecker delta of $a$ and $b$.
Therefore, we obtain
\begin{eqnarray} \label{computationpetersson}
\nonumber (F,G) &=& \frac{1}{[\Gamma_1:\Gamma]} \int_{\Gamma\setminus\HH} \sum_{a,b\in\mathcal{N}}f'_a(\tau)\overline{g'_b(\tau)}
\int^{\CC/\ZZ\tau+\ZZ} \theta_{m,a}(\tau,z)\overline{\theta_{m,b}(\tau,z)}e^{-4\pi my^2/v} dxdy v^{k-\frac52}dudv\\
&=& \frac{1}{\sqrt{4m}}\frac{1}{[\Gamma_1:\Gamma]} \int_{\Gamma\setminus\HH} \sum_{a\in\mathcal{N}} f'_a(\tau)\overline{g'_a(\tau)} v^{k-2}dudv.
\end{eqnarray}
Note that $f'= \sum_{a\in\mathcal{N}}\overline{f'_a}\mbf{e}_a$ and $g' = \sum_{a\in\mathcal{N}}\overline{g'_a}\mbf{e}_a$ are vector-valued modular forms in $S_{k,\overline{\chi'},\overline{\rho'}}(\Gamma)$ by Theorem \ref{isomorphismskew} and
\[
(g',f')_M := \frac{1}{[\Gamma_1:\Gamma]} \int_{\Gamma\setminus\HH} \sum_{a\in\mathcal{N}}\overline{g'_a(\tau)} f'_a(\tau) v^{k-2}dudv
\]
is the Petersson inner product for vector-valued modular forms $f' $ and $g'$.

Let $\mathcal{D}$ be a fundamental domain for $\Gamma(2)$.
Then the region $\mathcal{D}$ consists of six copies of the fundamental domain for $\Gamma_1$.
Note that  the  slash operator $|_{k,\overline{\chi'},\overline{\rho'}}$ can be extended to $\Gamma_1$ because $\chi$ is a multiplier system on $\Gamma_1$ and $\rho'$ comes from the transformation properties of theta series $\theta_{m,a}$.
From this one can see that
\begin{eqnarray*}
6C_k[\Gamma_1:\Gamma](g',f')_M &=& \sum_{A\in \Gamma\setminus \Gamma_1} \int_{\mathcal{D}} [g'|_{k,\overline{\chi'},\overline{\rho'}}A(\tau), \overline{f'|_{k,\overline{\chi'},\overline{\rho'}}A(\tau)}] (\tau-\bar{\tau})^{k-2}d\tau d\bar{\tau},
\end{eqnarray*}
where $[g',f'] := \sum_{a\in\mathcal{N}}\overline{g'_a f'_a}$ and $C_k := -(2i)^{k-1}$.
We define
\[
G_A(\tau) := \int^{\tau}_{i\infty} (g'|_{k,\overline{\chi'},\overline{\rho'}}A)(t)(t-\bar{\tau})^{k-2}dt.
\]
Since
\[
\frac{\partial G_A}{\partial \tau}(\tau) = (g'|_{k,\overline{\chi'},\overline{\rho'}}A)(\tau) (\tau-\bar{\tau})^{k-2},
\]
by Stokes' theorem we have
\[
6C_k[\Gamma_1:\Gamma](g',f')_M =  \sum_{A\in \Gamma\setminus \Gamma_1} \int_{\partial \mathcal{D}} [G_A(\tau), \overline{f'|_{k,\overline{\chi'},\overline{\rho'}}A(\tau)}]d\bar{\tau}.
\]
The region $\mathcal{D}$ has vertices $i\infty, -1, 0$ and $1$.
Since
\[
\int^{i\infty}_1 [G_A(\tau), \overline{f'|_{k,\overline{\chi'},\overline{\rho'}}A(\tau)}]d\bar{\tau} = \int^{i\infty}_{-1} [G_{AT^2}(\tau), \overline{f'|_{k,\overline{\chi'},\overline{\rho'}}AT^2(\tau)}]d\bar{\tau},
\]
the sum of integrals over the vertical sides of $\mathcal{D}$ vanishes.
For other parts of $\partial \mathcal{D}$  we compute
\begin{eqnarray*}
\int^0_{-1} [G_A(\tau), \overline{f'|_{k,\overline{\chi'},\overline{\rho'}}A(\tau)}] d\bar{\tau} &=& \int^{i\infty}_1 [G_{AS}(\tau), \overline{f'|_{k,\overline{\chi'},\overline{\rho'}}AS(\tau)}]d\bar{\tau} \\
&&+ \int^{i\infty}_{1}\int^{i\infty}_{0} [g'|_{k,\overline{\chi'},\overline{\rho'}}AS(t),\overline{f'|_{k,\overline{\chi'},\overline{\rho'}}AS(\tau)}](t-\bar{\tau})^{k-2}dtd\tau,\\
\int^1_{0} [G_A(\tau), \overline{f'|_{k,\overline{\chi'},\overline{\rho'}}A(\tau)}] d\bar{\tau} &=& \int_{i\infty}^{-1} [G_{AS}(\tau), \overline{f'|_{k,\overline{\chi'},\overline{\rho'}}AS(\tau)}]d\bar{\tau}\\
&& - \int^{i\infty}_{-1}\int^{i\infty}_{0} [g'|_{k,\overline{\chi'},\overline{\rho'}}AS(t),\overline{f'|_{k,\overline{\chi'},\overline{\rho'}}AS(\tau)}](t-\bar{\tau})^{k-2}dtd\tau.
\end{eqnarray*}
When we add two equations and sum over $A\in\Gamma\setminus\Gamma_1$, the single integrals cancel as before and
\begin{align} \label{eq2}
\nonumber &6C_k[\Gamma_1:\Gamma](g',f')_M \\
& \qquad =\sum_{A\in\Gamma\setminus\Gamma_1} \left(\int^{i\infty}_{1}\int^{i\infty}_0 - \int^{i\infty}_{-1}\int^{i\infty}_0\right) [g'|_{k,\overline{\chi'},\overline{\rho'}}A(t), \overline{f'|_{k,\overline{\chi'},\overline{\rho'}}A(\tau)}] (t-\bar{\tau})^{k-2}dtd\bar{\tau}.
\end{align}
After a change of variable, the first integral becomes
\[
\bigg\langle \int^{i\infty}_0 \overline{f'|_{k,\overline{\chi'},\overline{\rho'}}AT(\tau)}(\overline{T\tau}-X)^{k-2}d\overline{\tau}, \int^{i\infty}_0 g'|_{k,\overline{\chi'},\overline{\rho'}}A(t)(t-X)^{k-2}dt \bigg\rangle.
\]
Therefore, we have
\[
\sum_{A\in\Gamma\setminus\Gamma_1}
\int^{i\infty}_{1}\int^{i\infty}_0  [g'|_{k,\overline{\chi'},\overline{\rho'}}A(t), \overline{f'|_{k,\overline{\chi'},\overline{\rho'}}A(\tau)}]
= \lla P(\widetilde{\mathcal{E}^J}(F'))^c\|_{\frac52-k, m,\chi} (T^{-1},0), P(\widetilde{\mathcal{E}^J}(G'))\rra.
\]
By a similar argument one can see that
\[
\sum_{A\in\Gamma\setminus\Gamma_1}
\int^{i\infty}_{-1}\int^{i\infty}_0  [g'|_{k,\overline{\chi'},\overline{\rho'}}A(t), \overline{f'|_{k,\overline{\chi'},\overline{\rho'}}A(\tau)}]
= \lla P(\widetilde{\mathcal{E}^J}(F'))^c\|_{\frac52-k, m,\chi} (T,0), P(\widetilde{\mathcal{E}^J}(G')) \rra.
\]
By lemma \ref{periodrelation} we obtain
\begin{eqnarray*}
6C_k[\Gamma_1:\Gamma](g',f')_M &=& \lla P(\widetilde{\mathcal{E}^J}(F'))^c\|_{\frac52-k,m,\chi}((T^{-1},0)-(T,0)), P(\widetilde{\mathcal{E}^J}(G'))\rra\\
& =&\lla P(\widetilde{\hat{\mathcal{E}}^J}(F'))\|_{\frac52-k,m,\chi}((T^{-1},0)-(T,0)), P(\widetilde{\hat{\mathcal{E}}^J}(G'))^c\rra .
\end{eqnarray*}

Note that for each $A\in\Gamma\setminus\Gamma_1$
\[\widetilde{\hat{\mathcal{E}}^J}(F')(A) = \hat{\mathcal{E}}^J(F'|^{\sk}_{k+\frac12,m,\chi}A) = F_A^-.
\]
We can also define $\widetilde{F^-}$ and its period function $P(F^-)$ by the same way in (\ref{plus1}) and (\ref{plus2}).
Then one can see that $P(F^+) = -P(F^-)$ since $F = F^+ + F^-$ is invariant under the slash operator $|_{\frac52-k,m,\chi}$.
Therefore, we have
\[
6C_k[\Gamma_1:\Gamma](f',g')_M = \lla P(F^+)\|_{\frac52-k,m,\chi}((T^{-1},0)-(T,0)), P(G^+)^c\rra.
\]
If we combine this with (\ref{computationpetersson}), then
\[
6C_k[\Gamma_1:\Gamma](F',G') = \frac{1}{\sqrt{4m}} \{F^+, G^+\}.
\]

Furthermore, by a direct computation of (\ref{eq2}) one can see that
\begin{align*}
&6C_k[\Gamma_1:\Gamma](g',f')_M\\
&\qquad =\sum_{A\in\Gamma\setminus\Gamma_1}\sum_{a\in\mathcal{N}} \sum_{0\leq m+n\leq k-2} \frac{(k-2)!}{(k-2-m-n)!}\frac{i^{m+n+2}}{(2\pi)^{m+n+2}}\\
&\qquad \times\left((-1)^{k-2-m} L(\overline{(g'_A)_a},n+1)\overline{L(\overline{(f'_AT)_a},m+1)}-(-1)^m L(\overline{(g'_A)_a},n+1)\overline{L(\overline{(f'_AT^{-1})_a},m+1)}\right) \\
& \qquad = \sum_{A\in\Gamma\setminus\Gamma_1}\sum_{a\in\mathcal{N}} \sum_{0\leq m+n\leq k-2\atop m\neq\equiv n\ (\m\ 2)} \frac{(k-2)!}{(k-2-m-n)!}\frac{i^{m+n}}{(2\pi)^{m+n+2}}\\
&\qquad \times\biggl((-1)^{k-2-m} L^{\sk}(G'|^{\sk}_{k+\frac12,m,\chi}A,a,n+1)\overline{L^{\sk}(F'|^{\sk}_{k+\frac12,m,\chi}AT,a,m+1)}\\
&\qquad \qquad-(-1)^m L^{\sk}(G'|^{\sk}_{k+\frac12,m,\chi}A,a,n+1)\overline{L^{\sk}(F'|^{\sk}_{k+\frac12,m,\chi}AT^{-1},a,m+1)}\biggr).
\end{align*}
Here $L(\overline{(f'_A)_a},s)$ is a function defined by the analytic continuation of the series
\[
\sum_{n+\kappa>0} \frac{a(n)}{\left(\frac{n+\kappa}{\lambda}\right)^s}
\]
when $\overline{(f'_A)_a}$ has the Fourier expansion of the form
\[
\overline{(f'_A)_a}(\tau) = \sum_{n+\kappa>0} a(n)e^{2\pi i(n+\kappa)/\lambda}
\]
for suitable real number $\kappa\in[0,1)$ and positive integer $\lambda$.
\end{proof}

\end{document}